\documentclass[11pt,reqno,a4paper]{amsart}
\usepackage{amsfonts,hyperref}
\hypersetup{
    colorlinks=true,
    linkcolor=blue,
    citecolor=red,
    pdfstartpage=,
    pdftitle={relative convexity},
}
\usepackage[dvipsnames]{xcolor}
\usepackage{mathpazo}
\usepackage{graphicx}
\usepackage{lineno}
\usepackage[margin=3cm]{geometry}

\usepackage{soul}

\setlength\arraycolsep{2pt}

\newcommand{\R}{\mathbb{R}}

\newcommand{\N}{\mathbb{N}}

\newcommand{\vp}{\varphi}

\newtheorem{theorem}{Theorem}[section]

\theoremstyle{plain}

\newtheorem{corollary}[theorem]{Corollary}

\newtheorem{definition}{Definition}

\newtheorem{proposition}{Proposition}[section]

\numberwithin{equation}{section}

{\theoremstyle{definition}
	\newtheorem{remark}{Remark}
	\newtheorem{example}{Example}

}


\begin{document}

\title{On Relative Convex Sequences}

\author{A.~El Farissi\textsuperscript{1}, Z.~Latreuch\textsuperscript{1}, S.~Taf\textsuperscript{2} and M.~A.~Zemirni\textsuperscript{1}}

\address{\textsuperscript{1}~~National Higher School of Mathematics, Scientific and Technology Hub of Sidi Abdellah, P.O.Box 75, Algiers 16093, Algeria.}
\email{\href{abdallah.elfarissi@nhsm.edu.dz}{abdallah.elfarissi@nhsm.edu.dz}, \href{z.latreuch@nhsm.edu.dz}{z.latreuch@nhsm.edu.dz},  \href{amine.zemirni@nhsm.edu.dz}{amine.zemirni@nhsm.edu.dz}}

\address{\textsuperscript{2}~~Department of Mathematics, University of Mostaganem, B. P. 227, Mostaganem 27000, Algeria}
\email{\href{sabrina.taf@univ-mosta.dz}{sabrina.taf@univ-mosta.dz}}

	%
	%
\makeatletter
\@namedef{subjclassname@2020}{\textup{2020} Mathematics Subject Classification}
\makeatother
	\subjclass[2020]{26A51, 26D15.}

	\keywords{Convex sequence, convex function, relative convex sequence, V-shaped sequence, Hermite-Hadamard-Fej\'{e}r inequality, Lupas inequality, Majorization theory.}
	
	\maketitle
	\begin{abstract}
	In this paper, we introduce the concept of relative convex sequences and establish their fundamental properties, highlighting their similarities to those of convex sequences. Additionally, we prove new inequalities of the Lupas and Hermite-Hadamard-Fejér type for relative convex sequences. In certain cases, and as an application, we show how the concept of relative convexity can facilitate the derivation of new inequalities for convex sequences.
\end{abstract}

\section{Introduction and Motivation}

A real-valued function $ f $ defined on an interval $ I\subset \R $ is said to be convex if, for every $ x,y \in I$ and for every $ \lambda \in [0,1] $, $ f $ satisfies
	\begin{equation*}\label{cvf}
		f(\lambda x + (1-\lambda) y) \le \lambda f(x) + (1-\lambda) f(y). 
	\end{equation*} 
Furthermore, if $ f $ is differentiable, then $ f $ is convex if and only if its derivative $ f' $ is a non-decreasing function.
A sequence $ (a_i)_{i\ge1} $ of real numbers is said to be convex if it satisfies, for every $ i \ge 2 $,
\begin{equation}\label{cv}
a_i \leq \frac{a_{i-1}+a_{i+1}}{2}.
\end{equation}
This is equivalent to saying that the sequence $(\Delta a_i)_{i \ge 1}$, where $\Delta a_i =a_{i+1} -a_{i}$, is a non-decreasing sequence. 
The concept of convex sequences is, in fact, derived from the concept of convex functions.
Indeed, if $f$ is a convex function defined on $[1,\infty)$, then the sequence $(f(i))_{i\ge1}$ is a convex sequence. Conversely, if a sequence $(a_i)_{i\ge 1}$ is convex, then the function $f$ whose graph is the polygonal line with corner points $(i,a_i)$ is a convex function on $[1,\infty)$ as noted in \cite[Remark 1.12]{PPT}.

\medskip
Another convexity phenomenon concerning real functions is the \emph{relative convexity}, where the convexity is studied with respect to other functions. In fact, a function $ f $ is said to be \emph{convex with respect to} (or briefly, w.r.t) an increasing function $ g $ if $ f\circ g^{-1} $ is a convex function. In addition, if $ f $ and $ g $ are both differentiable, then $ f $ is convex w.r.t $ g $ if and only if $ f'/g' $ is non-decreasing. For more details on relative convex functions, we refer to \cite{cargo, HLP, MPF}.  
%
%
%
As the convexity of functions led to the convexity of sequences, we naturally anticipated a similar connection when it comes to relative convexity. However, to our surprise, we could not find anything in the literature regarding relative {convexity}  for real sequences. This, in fact, motivated us to shed the light on the  phenomenon of relative convexity for real sequences.

 
%


\medskip


\begin{definition}
	Let $ (a_i)_{i\ge 1} $ be a real sequence.  We say that $ (a_i)_{i\ge 1} $ is a relative convex sequence if there exists an increasing sequence $ (b_i)_{i\ge 1} $ such that $ (a_i)_{i\ge 1} $ is convex {with respect to}  $ (b_i)_{i\ge 1} $, or equivalently, if
	\begin{equation*}\label{sab}
		\left( \frac{\Delta a_i}{ \Delta b_i} \right)_{i\ge 1} \text{  is non-decreasing.}
	\end{equation*}
	 We denote by  $ c_r $  the class of all relative convex sequences.
\end{definition}

\medskip
It is clear that the convexity implies the relative convexity,  as every convex sequence is convex w.r.t $ (i)_{i\ge 1} $. However, the inverse is not true in general, as one may see that the sequence $ (\ln i)_{i\ge 3} $ is not convex while it is convex w.r.t $ (\ln (\ln i ))_{i\ge 3} $, and this last sequence is totally increasing, see Example~\ref{ex1}. In particular, one can easily check that a sequence in $ c_r $ is convex if and only if it is convex w.r.t an arithmetic sequence with a positive common difference.  




\medskip
Now, for a sequence $a= (a_i)_{i\ge1} $ we define the class $ T_a $ of increasing sequences $ (t_i)_{i\ge1} $  for which   $ (a_i)_{i\ge1} $ is convex w.r.t  $ (t_i)_{i\ge1}$. Notice that if $ (a_i)_{i=1}^n $ is a finite relative convex sequence of $ n $ terms, then the sequences in $ T_a $ are all finite sequences of $ n $ terms.  These new classes, $ T_a $, seem important as they have several properties to investigate. For example, one may ask under what conditions on $ a,b\in c_r $ we have $ T_a \cap T_b \neq \emptyset $ or $ T_a\subset T_b$, ...? As a partial answer, if $ a $ and $ b $ are increasing and $a\in T_b$, then $T_a\subset T_b$. Indeed, this can be shown directly from the following identity
	\begin{equation*}
		\frac{\Delta b_i}{\Delta s_i} = \frac{\Delta b_i}{\Delta a_i} \cdot\frac{\Delta a_i}{\Delta s_i}, \quad (s_i)_{i\ge 1} \in T_a.
	\end{equation*}
An immediate consequence of this property is: \emph{An increasing sequence that is convex w.r.t a convex sequence is convex.}  

The rest of this paper is organized as follows.  In Section~2, we provide tow main characterizations for relative convex sequences, which fail to be true for convex sequences.  Some similar properties for convex sequences are given in Section~3. Finally, Section~4 is devoted to presenting new inequalities for relative convex sequences, where some of them are counterpart of some classical inequalities for convex sequences.

%

\section{Two main characterizations for relative convex sequences}
In this section,  we give sufficient and necessary conditions for a real sequence to be relative convex.

\subsection{Relative convex sequences and convex functions}

Let $ f $ be a convex function on $ [1,+\infty) $. We know if we choose the sequence $ t_i =i $, $ i\ge1 $, in the interval $ [1,+\infty) $, then $ (f(t_i))_{i\ge1} $ is convex.  Based on this fact, a natural question arises: What can be said on the sequence $ (f(t_i))_{i\ge 1} $ when $ f $ is convex on an interval $ I\subset \R $, and $ (t_i)_{i\ge 1} $ is an arbitrary increasing sequence in $ I $? 

Conversely, we also know that if a sequence $ (a_i)_{i\ge 1} $ is convex, then the function whose graph is polygonal line with corner points at $ (i,a_i) $ is a convex function on $ [1, +\infty) $. Now, if we assume that $ (a_i)_{i\ge 1} $ is convex w.r.t. $ (t_i)_{i\ge1} $, then what can be said on the function whose graph is polygonal line with corner points at $ (t_i,a_i) $?

The answer to these questions are given in the following result.


\begin{theorem}\label{rc-cf}
	Let $ (a_i)_{i\ge 1} $  and $ (t_i)_{i\ge1} $ be real sequences, where $ (t_i)_{i\ge1} $ is increasing. Then, $ (t_i)_{i\ge1} \in T_a$ if and only if there exists a convex function $ \vp :D_{\vp}\to\R$ such that $ (t_i)_{i\ge1} \subset D_\vp$ and $ \vp(t_i) = a_i $ for every $ i \in \N$.
\end{theorem}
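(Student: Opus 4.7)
The plan is to prove the two directions by relying on the polygonal (piecewise linear) interpolation through the points $(t_i, a_i)$ and on the standard three-slopes characterization of convex functions.

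For the forward direction, assume $(t_i)_{i\ge 1}\in T_a$, i.e.\ the sequence $\bigl(\tfrac{\Delta a_i}{\Delta t_i}\bigr)_{i\ge 1}$ is non-decreasing. I would take $D_\vp$ to be the interval $[t_1, \sup_i t_i)$ (or a closed bounded interval if the sequence is finite) and define
\[
\vp(x)=a_i+\frac{a_{i+1}-a_i}{t_{i+1}-t_i}(x-t_i),\qquad x\in[t_i,t_{i+1}],
\]
which is well-defined and continuous because $(t_i)$ is strictly increasing and the definitions agree at each node $t_{i+1}$. By construction $\vp(t_i)=a_i$ and $\vp$ is piecewise affine with slopes $\tfrac{\Delta a_i}{\Delta t_i}$. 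A piecewise affine continuous function whose consecutive slopes are non-decreasing is convex; I would justify this either by direct comparison of chord slopes or by noting that the right-derivative of $\vp$ is non-decreasing.

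For the converse, suppose a convex $\vp:D_\vp\to\R$ satisfies $\vp(t_i)=a_i$ for all $i$. For any three points $x<y<z$ in $D_\vp$, convexity yields the standard three-chord inequality
\[
\frac{\vp(y)-\vp(x)}{y-x}\le\frac{\vp(z)-\vp(y)}{z-y}.
\]
Applying this with $x=t_i$, $y=t_{i+1}$, $z=t_{i+2}$ (which are strictly ordered because $(t_i)$ is increasing) gives $\tfrac{\Delta a_i}{\Delta t_i}\le\tfrac{\Delta a_{i+1}}{\Delta t_{i+1}}$, so $(t_i)_{i\ge 1}\in T_a$.

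The argument is essentially a transcription of the classical correspondence between convex sequences and polygonal convex functions, with the $x$-coordinates $i$ replaced by the $t_i$'s; accordingly no real obstacle should appear. The one mildly delicate point is specifying the domain $D_\vp$ so that the constructed polygonal function really covers all the nodes, especially in the case when $\sup_i t_i<\infty$ (where a supremum may or may not be attained) or when the sequence is indexed by a finite set, but this is handled by choosing $D_\vp$ to be the convex hull of $\{t_i\}$.
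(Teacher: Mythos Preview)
Your proposal is correct and follows essentially the same route as the paper: both construct the piecewise linear interpolant through the nodes $(t_i,a_i)$ for the forward direction and invoke the three-slopes (chordal slope) inequality for the converse. The only cosmetic difference is that the paper verifies convexity of the polygonal function by an explicit midpoint-convexity computation (plus continuity), whereas you appeal to the equivalent fact that a continuous piecewise affine function with non-decreasing slopes is convex.
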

\begin{proof}
	First, suppose that $ (t_i)_{i\ge1} \in T_a$. Define the function $ {a}^*_t $ to be the polygonal function with corner points at $ (t_i, a_i) $, $ i\ge 1 $. Clearly $ {a}_t^* $ is a piecewise linear function; it is defined as follows
		\begin{equation}\label{defp}
			a_t^*(x) = a_i  + \frac{\Delta a_i}{\Delta t_i} (x -t_i), \quad x\in \left[ t_i, t_{i+1}\right) , \, i=1,2, \ldots.
		\end{equation}
	We prove next that $ a_t^* $ is convex on $ \left[ t_1,+\infty\right)  $.   Let $ x,y \in  \left[ t_1,+\infty\right)  $ with $ x<y $. Then there exist positive integers $ i,j,k $ satisfying $ 1 \le i\le k\le j $ such that
	\begin{equation*}
		x\in \left[ t_i,t_{i+1}\right) , \quad y\in\left[ t_j, t_{j+1}\right) ,\quad (x+y)/2\in \left[ t_k,t_{k+1}\right) .
	\end{equation*}
Therefore, by using \eqref{defp} and by using the fact that $ (a_i)_{i\ge 1} $ is convex w.r.t $ (t_i)_{t\ge1} $ we obtain
\begin{align*}
	a_t^*(x) &=a_i+\frac{\Delta a_i}{\Delta t_i}\left(x-t_i\right)\\
	&=a_{i+1}+\frac{\Delta a_i}{\Delta t_i}\left(x-t_{i+1}\right) \\
	& \geq a_{i+1}+\frac{\Delta a_{i+1}}{\Delta t_{i+1}}\left(x-t_{i+1}\right)\ge \cdots \geq a_k+\frac{\Delta a_k}{\Delta t_k}\left(x-t_k\right), 
\end{align*}
Similarly we obtain
	\begin{align*}
		a_t^*(y) &=a_j+\frac{\Delta a_j}{\Delta t_j}\left(y-t_j\right) \\
		& \geq a_j+\frac{\Delta a_{j-1}}{\Delta t_{j-1}}\left(y-t_j\right) \\
		&=a_{j-1}+\frac{\Delta a_{j-1}}{\Delta t_{j-1}}\left(y-t_{j-1}\right) \ge \cdots\ge a_k+\frac{\Delta a_k}{\Delta t_k}\left(y-t_k\right).
	\end{align*}
	Hence
\begin{equation*}
	\frac{a_t^*(x)+a_t^*(y)}{2} \ge \frac{2 a_k+\frac{\Delta a_k}{\Delta t_k}\left(x+y-2 t_k\right)}{2}=a_k+\frac{\Delta a_k}{\Delta t_k}\left(\frac{x+y}{2}-t_k\right)=a_t^*\left(\frac{x+y}{2}\right).
\end{equation*}
{This, together with the continuity of $ a_t^* $ on $ \left[ t_1,+\infty\right)$, implies the convexity of $ a_t^* $.}  Thus, by taking $ \vp \equiv a_t^* $, we see that $ \vp $ is convex and $ \vp(t_i) =a_i $ for every $ i\ge1 $.

\medskip
Now, suppose that there exists a convex function $ \vp :D_{\vp}\to\R$ such that $ (t_i)_{i\ge1} \subset D_\vp$ and $ \vp(t_i) = a_i $  for every $ i\ge1 $. We prove that $ (t_i)_{i\ge1} \in T_a$,  i.e. $ (a_i)_{i\ge 1} $ is convex w.r.t $ (t_i)_{i\ge1} $. Let $ i\ge 1 $ be an  arbitrary positive integer. We have from the convexity of $ \vp $ (Chordal Slope Lemma),
	\begin{equation*}
		\frac{\vp(t_{i+1})-\vp(t_i)}{t_{i+1} - t_i} \le \frac{\vp(t_{i+2}) - \vp(t_{i+1})}{t_{i+2}-t_{i+1}}.
	\end{equation*}
Since $ \vp(t_i) = a_i $ for any $ i\ge 1 $, it follows that
	\begin{equation*}
		\frac{\Delta a_i}{\Delta t_i} \le \frac{\Delta a_{i+1}}{\Delta t_{i+1}}, \quad i=1,2, \ldots,
	\end{equation*}
which means that $ (\Delta a_i / \Delta t_i)_{i\ge1} $ is a non-decreasing sequence, and hence $ (t_i)_{i\ge1}  \in T_a$.
 \end{proof}

\begin{remark}
	Theorem~\ref{rc-cf} applies also to the case when $ (a_i)_{i\ge1} $ is a finite sequence. 
	  
\end{remark}

\begin{example}\label{ex1}
	The sequence $(a_i)_{i\geq 3}$ defined by $a_i= \ln i$, is convex w.r.t. $(\ln(\ln i))_{i\geq 3}$. Indeed, one may see that the convex function $\vp:\left[0,\infty \right) \to \R $ given by $\vp(t)=e^t$ satisfies 
	$
	\varphi(\ln(\ln i))=a_i,
	$
	for all $i\in \N$.
\end{example}

\subsection{Relative convexity and V-shaped sequences}
A real sequence (resp. a real function) is called V-shaped if it is either  monotonic or {non-increasing} and then {non-decreasing}. It is known that every convex sequence (resp. convex function) is V-shaped  \cite[Lemma 1]{RS} \cite[Lemma 1.1.4]{NP}. {Upon careful examination of the convexity phenomenon, it appears that a convex sequence (resp. convex function) adheres to one of the following cases}:
%
\begin{enumerate}
	\item strictly monotonic. 
	\item decreasing and then constant.
	\item constant and then increasing.
	\item decreasing and then increasing. 
	\item decreasing and then constant and then increasing. 
\end{enumerate}
In this paper,  we say that a sequence (resp. a function) is \emph{strictly V-shaped} if it satisfies one of the five cases above.
Notice that a strictly V-shaped sequence is not necessarily convex. For instance, the sequence $ (\ln i)_{i\ge1} $ is strictly monotonic, but it is not convex.  
On the other hand, we find that a sequence is relative convex if and only if it is strictly V-shaped as proved in the following result. 

\begin{theorem}\label{svs}
	A real sequence is relative convex if and only if it is strictly V-shaped.
\end{theorem}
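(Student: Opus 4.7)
The plan is to prove the two implications separately, with the forward direction being nearly immediate and the converse requiring an explicit piecewise construction.

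For the forward direction, I would assume $(a_i)$ is convex w.r.t. some strictly increasing $(t_i)$, so that $d_i := \Delta a_i / \Delta t_i$ is non-decreasing. Since $\Delta t_i > 0$, the sign of $\Delta a_i$ coincides with the sign of $d_i$. A non-decreasing real sequence is strictly negative on an initial block, zero on a middle block, and strictly positive on a final block, where any of these three blocks may be empty. Translating back via the signs of $\Delta a_i$, the sequence $(a_i)$ is strictly decreasing, then constant, then strictly increasing, which is precisely one of the five cases defining strictly V-shaped.

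For the converse, given a strictly V-shaped $(a_i)$, I would split it at the natural breakpoints: a strictly decreasing prefix for $1 \le i \le p$, a constant block for $p \le i \le q$ (so $a_p = a_{p+1} = \cdots = a_q$), and a strictly increasing suffix for $i \ge q$; any of these blocks is allowed to be empty or a single index. The construction glues three natural pieces together: set $t_i = -a_i + c_1$ on the prefix (slope ratio $-1$), $t_i = t_p + \delta(i - p)$ with $\delta > 0$ on the middle block (slope ratio $0$), and $t_i = t_q + (a_i - a_q)$ on the suffix (slope ratio $+1$). The three formulas automatically match at $i = p$ and $i = q$, the pasted $(t_i)$ is strictly increasing on each block, and the resulting slope sequence $(-1,\dots,-1,0,\dots,0,1,\dots,1)$ is non-decreasing. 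This exhibits $(a_i)$ as convex w.r.t. $(t_i)$, hence relative convex.

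The main obstacle is organizational rather than analytic: one must verify that the construction behaves gracefully across all five subcases, in particular when a middle or flanking block is absent, so that the truncated slope sequence remains non-decreasing and no nontrivial interface matching is needed. A conceptually cleaner alternative is to invoke Theorem~\ref{rc-cf} and instead construct the piecewise-linear convex function $\varphi$ whose slopes are $-1, 0, 1$ on the three corresponding intervals, then read off $(t_i)$ from $\varphi(t_i) = a_i$; this is essentially the same construction repackaged as a convex function.
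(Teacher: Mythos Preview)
Your proposal is correct and takes a somewhat different route from the paper in both directions. For the forward implication, the paper invokes Theorem~\ref{rc-cf} to pass through the polygonal convex function $a_t^*$ and then quotes the fact that convex functions are strictly V-shaped; your direct sign analysis of the non-decreasing sequence $d_i=\Delta a_i/\Delta t_i$ is more elementary and self-contained, since it avoids any appeal to continuous convexity. For the converse, the paper builds $(t_i)$ recursively via $t_{i+1}=t_i+\Delta a_i/s_i$ for an auxiliary increasing sequence $(s_i)$ whose signs are chosen block by block; your explicit choice $t_i=-a_i+c_1$ on the decreasing prefix, an arithmetic progression on the flat part, and $t_i=t_q+(a_i-a_q)$ on the increasing suffix is simply the special case $s_i\in\{-1,0,+1\}$ of that construction, packaged more concretely. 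The paper's formulation makes it clearer that $T_a$ is large (one free increasing sequence of parameters), whereas your version makes the verification of monotonicity and of the non-decreasing slope sequence entirely transparent. Both arguments need the same bookkeeping to cover degenerate cases where one or two of the three blocks collapse, and your closing remark about reading the construction through Theorem~\ref{rc-cf} as a piecewise-linear $\varphi$ with slopes $-1,0,1$ is exactly the bridge between the two viewpoints.
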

\begin{proof}
	Let $ (a_i)_{i\ge1} $ be a real sequence.
	Suppose that $ (a_i)_{i\ge1}\in c_r $, and let $ (t_i)_{i\ge1} \in T_a $. Then the polygonal function $ a_t^* $, defined in \eqref{defp}, is convex and $ a_t^*(t_i) = a_i $ for every $ i\ge1 $. Since every convex function is strictly V-shaped, it follows that $ (a_i)_{i\ge1} $ is also strictly V-shaped.
	
	\medskip
\noindent	Suppose now that $ (a_i)_{i\ge1} $ is strictly V-shaped.  Assume that $ (a_i)_{i\ge1} $ is increasing. By letting  $ (s_i)_{i\ge1} $ be any positive increasing sequence, we define the sequence $ (t_i)_{i\ge1} $ recursively by: $ t_1 =\alpha$, where $ \alpha \in \R $ is arbitrarily chosen, and 
\begin{equation}\label{t}
	t_{i+1} = \frac{a_{i+1} - a_i}{s_i} +t_{i}, \quad i=1,2, \ldots
\end{equation} 
From this construction, it is clear that $ (t_i)_{i\ge1} $  is increasing  and $ (a_i)_{i\ge1} $ is convex w.r.t. $ (t_i)_{i\ge1} $, that is, $ (a_i)_{i\ge1}$ is relative convex. Notice that if $ (a_i)_{i\ge1} $ is finite with $ n $ terms, then $ (s_i)_{i\ge1} $ is  taken to be finite with $ n-1 $ terms.
If $ (a_i)_{i\ge1} $ is decreasing, then we take $ (s_i)_{i\ge1} $ to be any negative increasing sequence, and similarly, we define $ (t_i)_{i\ge1} $ as in \eqref{t} to get that $ (t_i)_{i\ge1}  \in T_a$.
If there exists $ m \ge1 $ such that 
	\begin{equation*}
		a_1>a_2> \cdots >a_{m-1}> a_m \quad\text{and}\quad a_m < a_{m+1} < \cdots,
	\end{equation*}  
then we take $ (s_i)_{i\ge1} $ to be any increasing sequence whose first $ m-1 $ terms are negative,~i.e., 
\begin{equation*}
	s_1 < s_2< \cdots<s_{m-1} < 0 < s_m < s_{m+1}<\cdots.
\end{equation*}
Again, we define $ (t_i)_{i\ge1} $ as in \eqref{t} to get that $ (t_i)_{i\ge1}  \in T_a$. Assume now that there exist $ m,\ell \ge 1 $ such that 
\begin{equation*}
	a_1>a_2> \cdots >a_{m-1}> a_m =\cdots= a_{m+\ell} < a_{m+\ell+1} < \cdots,
\end{equation*}  
From what we have done yet, there exists an increasing sequence $ (t_i)_{i=1}^m $ such that $  (a_i)_{i=1}^m $ is convex w.r.t  $ (t_i)_{i=1}^m $. Next, we take any numbers $ t_{m+1}, \ldots, t_{m+\ell} $ satisfying $t_m< t_{m+1} < \cdots< t_{m+\ell} $.  Since $ a_m = \cdots= a_{m+\ell} $, it remains true that  $   (a_i)_{i=1}^{m+\ell}  $ is convex w.r.t  $ (t_i)_{i=1}^{m+\ell} $. Moreover, we can construct an increasing sequence $ (t_i)_{i\ge m+\ell+1} $ for which  $ (a_i)_{i\ge m+\ell+1} $ is convex w.r.t $ (t_i)_{i\ge m+\ell+1} $ and $ t_{ m+\ell+1} > t_{m+\ell} $. Finally, we see that $  (a_i)_{i\ge1} $ is convex w.r.t $ (t_i)_{i\ge 1}$.
%
%
%
%
%
%
\end{proof}

{The following result presents an additional property of strictly V-shaped sequences and their relative convexity. }
\begin{theorem}
	Let $ (a_i)_{i=1}^n $ be a strictly V-shaped sequence. Then for any interval $ [\alpha,\beta]$ there exists a subdivision $ (t_i)_{i=1}^n $ of $ [\alpha,\beta] $ for which $ (t_i)_{i=1}^n \in T_a$.
\end{theorem}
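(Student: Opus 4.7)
The plan rests on two observations. First, by Theorem~\ref{svs}, any strictly V-shaped sequence $(a_i)_{i=1}^n$ admits at least one strictly increasing sequence $(\tau_i)_{i=1}^n\in T_a$. Second, the class $T_a$ is invariant under strictly increasing affine transformations: if $(\tau_i)\in T_a$ and $c>0$, $d\in\R$, then setting $t_i=c\tau_i+d$ gives $\Delta t_i=c\,\Delta\tau_i>0$ and
\begin{equation*}
\frac{\Delta a_i}{\Delta t_i}=\frac{1}{c}\cdot\frac{\Delta a_i}{\Delta \tau_i},
\end{equation*}
so $(\Delta a_i/\Delta t_i)_{i=1}^{n-1}$ is still non-decreasing, and hence $(t_i)\in T_a$.

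With these two ingredients, the proof becomes a one-line choice of parameters. Starting from the sequence $(\tau_i)\in T_a$ furnished by Theorem~\ref{svs}, I would set
\begin{equation*}
c=\frac{\beta-\alpha}{\tau_n-\tau_1},\qquad d=\alpha-c\tau_1,\qquad t_i=c\tau_i+d,\quad i=1,\dots,n.
\end{equation*}
Since $\tau_1<\tau_n$, we have $c>0$, and a direct check gives $t_1=\alpha$, $t_n=\beta$, with $(t_i)$ strictly increasing. By the affine-invariance observation, $(t_i)\in T_a$, so $(t_i)_{i=1}^n$ is the desired subdivision of $[\alpha,\beta]$.

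There is no real obstacle here; the only thing to verify carefully is the affine invariance of $T_a$, which is immediate from the definition. If one prefers a self-contained argument without quoting Theorem~\ref{svs}, the alternative is to re-run the recursion \eqref{t} from the proof of Theorem~\ref{svs}, starting with $t_1=\alpha$ and choosing an auxiliary sequence $(s_i)$ with the correct sign pattern (dictated by the turning behaviour of $(a_i)$), and then rescaling $(s_i)\mapsto (s_i/\lambda)$ with $\lambda=(\beta-\alpha)/\sum_{i=1}^{n-1}(a_{i+1}-a_i)/s_i$ so that $t_n=\beta$; the V-shape hypothesis guarantees that each term $(a_{i+1}-a_i)/s_i$ is positive, so the sum is positive and $\lambda$ is well defined.
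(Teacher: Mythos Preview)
Your main argument is correct and takes a genuinely different route from the paper. The paper proceeds by a direct case-by-case construction inside $[\alpha,\beta]$: in the strictly increasing case it fixes $t_1=\alpha$, $t_n=\beta$ and builds the intermediate $t_i$ via the recursion $t_{i+1}=(a_{i+1}-a_i)/s_i+t_i$, choosing each $s_i$ within explicit bounds $s_{i-1}<s_i<(a_n-a_i)/(\beta-t_i)$ so that the construction never escapes $[\alpha,\beta]$; the other V-shape cases are then handled by splitting the interval at an interior point and invoking the proof of Theorem~\ref{svs}. Your approach instead isolates the single structural fact that $T_a$ is closed under positive affine maps of the $t$-variable, and then simply transports any $(\tau_i)\in T_a$ produced by Theorem~\ref{svs} onto $[\alpha,\beta]$. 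This is cleaner and avoids all the bounds-checking and case splits; the price is that it is not self-contained, relying on Theorem~\ref{svs} as a black box, whereas the paper's construction makes the freedom in choosing the subdivision more visible.

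One small caveat about your alternative sketch at the end: the rescaling $(s_i)\mapsto(s_i/\lambda)$ works cleanly only in the strictly monotone or single-minimum cases. If $(a_i)$ has a constant stretch (the ``decreasing, then constant, then increasing'' case in the definition of strictly V-shaped), the terms $(a_{i+1}-a_i)/s_i$ vanish there and the recursion~\eqref{t} alone does not produce a strictly increasing $(t_i)$; one must insert the constant-part values by hand as in the proof of Theorem~\ref{svs}. This does not affect your primary affine-invariance argument, which handles all cases uniformly.
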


\begin{proof}
	We consider first the case when $ (a_i)_{i=1}^n $ is increasing. Choose $ t_1 = \alpha $, $ t_n = \beta $ and~$ s_1 $ to be any number satisfying 
	\begin{equation*}
		0< s_1< \frac{a_n - a_1}{\beta - \alpha}.
	\end{equation*} 
	For   given $ t_i $ and $ s_{i-1} $ (we set $ s_0 =0 $), we define $ t_{i+1} $ as follows
	\begin{equation*}
		t_{i+1} = \frac{a_{i+1} - a_i}{s_i} + t_i, \quad i=1, \ldots, n-1,
	\end{equation*}
	where $ s_i $ is any number satisfying 
	\begin{equation*}
		s_{i-1}< s_i< \frac{a_n - a_i}{\beta - t_i}, \quad \text{for}\; i=1,\ldots,n-2,
	\end{equation*} 
	and
	$$
	s_{n-1}=\frac{a_n-a_{n-1}}{\beta-t_{n-1}}.
	$$
	One can easily check that $ (t_i)_{i=1}^n $ is increasing and it is a subdivision of $ [\alpha,\beta] $. In addition, $ (t_i)_{i=1}^n \in T_a$. 
%
	If  $ (a_i)_{i=1}^n $ is decreasing, then we follow the same construction as above. The only difference is that the sequence  $ (s_i)_{i=1}^{n-1} $ will be negative.
	If  $ (a_i)_{i=1}^n $ has one minimal term, i.e., there exists $ m \in \{2, \ldots, n-1\}$  for which  
	\begin{equation*}
		a_1> \cdots> a_m \quad\text{and} \quad a_m < \cdots <a_n,
	\end{equation*}
	then choose $ \gamma \in ]\alpha,\beta[$,  and follow the same construction as above for the sequence $ (a_i)_{i=1}^m  $ on $ [\alpha,\gamma] $ and for the sequence $ (a_i)_{i=m}^n $ on $ [\gamma,\beta] $. 
	
	\medskip
	\noindent The other cases can be handled analogously as in the proof of Theorem~\ref{svs}.
\end{proof}

We close this section with the following corollary.
\begin{corollary}
	The class $ c_r $ is not closed under the operation of addition.
\end{corollary}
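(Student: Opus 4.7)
The plan is to leverage Theorem~\ref{svs}, which characterises relative convexity as being strictly V-shaped. So it suffices to exhibit two strictly V-shaped sequences whose sum fails to be strictly V-shaped. The main obstacle, which is very minor, is finding a clean example where both summands fit one of the five V-shape patterns while the sum displays a local maximum followed by a decrease, violating all five cases.

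I would take the simplest kind of V-shaped sequences, namely strictly monotonic ones. Set
\begin{equation*}
(a_i)_{i=1}^{3} = (0, 9, 10), \qquad (b_i)_{i=1}^{3} = (10, 9, 0).
\end{equation*}
The first differences are $\Delta a = (9,1)$ and $\Delta b = (-1,-9)$, so $(a_i)$ is strictly increasing and $(b_i)$ is strictly decreasing. Both thus fall under case (1) of the strictly V-shaped classification, hence belong to $c_r$ by Theorem~\ref{svs}.

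On the other hand, the sum is $(a_i+b_i)_{i=1}^{3} = (10, 18, 10)$, whose first differences $(8,-8)$ show it is increasing and then decreasing. This pattern is excluded from the five V-shape cases, so $(a_i+b_i)$ is not strictly V-shaped and therefore not relative convex by Theorem~\ref{svs}. This produces two elements of $c_r$ whose sum lies outside $c_r$, completing the proof. To keep the presentation self-contained, I would briefly recall the reduction via Theorem~\ref{svs} before displaying the counterexample.
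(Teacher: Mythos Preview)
Your proof is correct and follows the same route as the paper: reduce via Theorem~\ref{svs} to exhibiting two strictly V-shaped sequences whose sum is not strictly V-shaped. The only difference is the choice of counterexample---the paper uses the infinite sequences $\sqrt{|n-3|}$ and $\sqrt{|n-9|}$, while your finite monotone example $(0,9,10)$ and $(10,9,0)$ is arguably cleaner.
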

This can be seen by taking the sequences: $ \sqrt{|n-3|} $ and $ \sqrt{|n-9|} $ for $ n\ge1 $. Both of them are strictly V-shaped, so by Theorem~\ref{svs}, they are relative convex. However, their sum is not strictly V-shaped, and then it is not relative convex.

\section{Analogous properties to convex sequences}

In this section, we give some results on relative convex sequences that are analogous to those of convex sequences.  

By making use of Theorem~\ref{rc-cf}, we give the following essential property of relative convex sequences.
\begin{proposition}\label{P1}
Let $(a_i)_{i\geq 1}$ be a real sequence. Then, $(t_i)_{i\geq 1}\in T_a$ if and only if the inequality 
\begin{equation}\label{grv}
a_i \leq \frac{\Delta t_{i}  }{\Delta t_{i} +\Delta t_{i-1}} a_{i-1}+\frac{ \Delta t_{i-1}  }{\Delta t_{i} +\Delta t_{i-1}}a_{i+1}
\end{equation}
holds for every $i\geq 2$. In particular, if $(a_i)_{i\geq 1}$ is increasing, then $(t_i)_{i\geq 1}\in T_a$ if and only if the inequality 
\begin{eqnarray}\label{grv2}
\frac{\Delta ^2 a_i}{\Delta  a_i} \geq \frac{\Delta^2 t_i}{\Delta t_i}
\end{eqnarray}
holds for every $i\ge1$.
\end{proposition}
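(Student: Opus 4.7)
The plan is to prove both equivalences by straightforward algebraic manipulation of the defining condition of $T_a$, namely that $(\Delta a_i/\Delta t_i)_{i\ge 1}$ is non-decreasing. No appeal to Theorem~\ref{rc-cf} is actually necessary, though one could alternatively deduce \eqref{grv} from the Chordal Slope Lemma applied at the three consecutive points $t_{i-1},t_i,t_{i+1}$ of the convex polygonal extension furnished by that theorem.

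First I would handle \eqref{grv}. Since any $(t_i)_{i\ge 1}\in T_a$ is by definition strictly increasing, every $\Delta t_i$ is strictly positive, so the monotonicity condition $\Delta a_{i-1}/\Delta t_{i-1}\le \Delta a_i/\Delta t_i$ (for $i\ge 2$) can be cleared of denominators to give $\Delta a_{i-1}\,\Delta t_i\le \Delta a_i\,\Delta t_{i-1}$. Expanding the forward differences and collecting the coefficient of $a_i$ on the left produces
$$a_i\,(\Delta t_{i-1}+\Delta t_i)\le a_{i-1}\,\Delta t_i + a_{i+1}\,\Delta t_{i-1},$$
which, after dividing by the positive quantity $\Delta t_{i-1}+\Delta t_i$, is precisely \eqref{grv}. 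Every step is reversible, settling the first equivalence.

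For the "in particular" part I would exploit the identity $\Delta^2 a_i=\Delta a_{i+1}-\Delta a_i$, which gives
$$\frac{\Delta^2 a_i}{\Delta a_i}=\frac{\Delta a_{i+1}}{\Delta a_i}-1,\qquad \frac{\Delta^2 t_i}{\Delta t_i}=\frac{\Delta t_{i+1}}{\Delta t_i}-1,$$
so that \eqref{grv2} is equivalent to $\Delta a_{i+1}/\Delta a_i\ge \Delta t_{i+1}/\Delta t_i$. Under the hypothesis that $(a_i)_{i\ge 1}$ is increasing, both $\Delta a_i$ and $\Delta t_i$ are strictly positive, so a further cross-multiplication transforms this inequality into $\Delta a_{i+1}/\Delta t_{i+1}\ge \Delta a_i/\Delta t_i$, which is exactly the defining condition of $(t_i)_{i\ge 1}\in T_a$.

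There is no real obstacle here: the argument is a short chain of reversible algebraic rearrangements. The only care-points are the positivity of the denominators $\Delta t_i$ (guaranteed by $(t_i)_{i\ge 1}\in T_a$) and, for the second equivalence, the positivity of $\Delta a_i$ (supplied by the extra monotonicity hypothesis on $(a_i)_{i\ge 1}$). I would state the first equivalence first in the write-up and then deduce \eqref{grv2} as a convenient reformulation when $(a_i)_{i\ge 1}$ is increasing.
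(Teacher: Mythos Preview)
Your argument is correct. For \eqref{grv} you take a different, more elementary route than the paper: the paper invokes Theorem~\ref{rc-cf} to obtain the convex polygonal extension $a_t^*$ and then reads off the inequality from the convexity of $a_t^*$ at the point $t_i=\lambda_i t_{i+1}+(1-\lambda_i)t_{i-1}$, whereas you simply clear denominators in the defining monotonicity condition and rearrange. Your approach has the advantage of being self-contained and of giving both implications in one reversible chain; the paper's approach, by contrast, only writes out the forward direction explicitly (the converse being left implicit). For \eqref{grv2} the two arguments are essentially identical: both reduce the inequality to $\Delta a_{i+1}/\Delta a_i\ge \Delta t_{i+1}/\Delta t_i$ via the identity $\Delta^2 x_i/\Delta x_i=\Delta x_{i+1}/\Delta x_i-1$ and then cross-multiply using the positivity of $\Delta a_i$ and $\Delta t_i$.
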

\begin{proof}
To show \eqref{grv}, one has just to take the piecewise continuous function $a_t^*$ defined in \eqref{defp}, which is convex according to Theorem~\ref{rc-cf}. Hence, for any integer $i \geq 2$, we have
\begin{eqnarray}
a_i= a_t^*(t_i)&=& a_t^* (\lambda_{i} t_{i+1}+(1-\lambda_{i}) t_{i-1}) \nonumber \\
&\leq & \lambda_{i} a_t^*(t_{i+1}) +(1-\lambda_{i}) a_t^*(t_{i-1}) \nonumber \\
&=& \lambda_{i} a_{i+1} +(1-\lambda_{i}) a_{i-1},\nonumber
 \end{eqnarray}
 where 
 $$\lambda_{i}= \frac{t_i-t_{i-1}}{t_{i+1}-t_{i-1}}=\frac{\Delta t_{i-1}}{\Delta t_{i} +\Delta t_{i-1}} \in (0,1).$$
 
 Let's deal now with \eqref{grv2}. By definition, $(t_i)_{i\geq 1}\in T_a$  if and only if
 $$
{ \frac{\Delta a_{i+1}}{\Delta t_{i+1}} \geq  \frac{\Delta a_{i}}{\Delta t_{i}}, \quad \text{for all}\; i\geq 1.}
 $$
 From this and the fact that $(a_i)_{i\geq 1}$ is increasing, we have
\begin{eqnarray}\label{in1}
{ \frac{\Delta a_{i+1}}{\Delta a_{i}} \leq  \frac{\Delta t_{i+1}}{\Delta t_{i}}, \quad \text{for all}\; i\geq 1.}
\end{eqnarray}
 Adding $ -1 $ to the both sides of \eqref{in1}, we obtain \eqref{grv2}.
\end{proof}

	One may see in Proposition~\ref{P1} that if the sequence $(t_i)_{i\geq 1}\in T_a$ is arithmetic, then \eqref{grv} and \eqref{grv2} reduce to \eqref{cv}. 
 
 The following result is a direct consequence of Proposition~\ref{P1}.
\begin{corollary}\label{Pro4}
	Let $(a_i)_{i\geq 1} \subset I$ be a real sequence, and let $\psi:  I \rightarrow \mathbb{R}$ be a non-decreasing convex function.
	Then $ T_a \subset T_{\psi(a)} $.
\end{corollary}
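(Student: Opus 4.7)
The plan is to use Proposition~\ref{P1} as a bridge in both directions, sandwiching the monotonicity and convexity of $\psi$ in between. Let $(t_i)_{i\ge 1}\in T_a$; the goal is to verify, via Proposition~\ref{P1}, the inequality \eqref{grv} with the sequence $(\psi(a_i))_{i\ge 1}$ in place of $(a_i)_{i\ge 1}$.

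The first step is the forward direction of Proposition~\ref{P1}: from $(t_i)\in T_a$ I get, for every $i\ge 2$,
\[
a_i \le \mu_i\, a_{i-1} + (1-\mu_i)\, a_{i+1}, \qquad \mu_i := \frac{\Delta t_i}{\Delta t_i + \Delta t_{i-1}} \in (0,1),
\]
where $\mu_i\in(0,1)$ uses that $(t_i)$ is strictly increasing. The second step applies $\psi$ to both sides: since $\psi$ is non-decreasing, the inequality is preserved, and since $\psi$ is convex (with the convex combination of $a_{i-1},a_{i+1}\in I$ again lying in $I$), I may split the right-hand side to obtain
\[
\psi(a_i) \le \mu_i\, \psi(a_{i-1}) + (1-\mu_i)\, \psi(a_{i+1}),
\]
which is precisely \eqref{grv} for the sequence $(\psi(a_i))_{i\ge 1}$.

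The final step is the converse direction of Proposition~\ref{P1}, applied now to $(\psi(a_i))_{i\ge 1}$: this inequality implies $(t_i)\in T_{\psi(a)}$, completing the argument. There is no genuine obstacle here; the only points to keep in view are that $\mu_i\in(0,1)$ (needed to invoke convexity of $\psi$) and that the convex combination of $a_{i-1}$ and $a_{i+1}$ remains in the domain $I$, which is precisely where the hypothesis that $I$ is an interval is used.
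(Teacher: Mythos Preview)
Your proof is correct and follows essentially the same route as the paper: apply the forward direction of Proposition~\ref{P1} to obtain \eqref{grv} for $(a_i)$, use monotonicity of $\psi$ to preserve the inequality and convexity of $\psi$ to split the convex combination, then invoke the converse direction of Proposition~\ref{P1} to conclude $(t_i)\in T_{\psi(a)}$. If anything, you are slightly more explicit than the paper in noting that $\mu_i\in(0,1)$ and that the convex combination of $a_{i-1},a_{i+1}$ stays in $I$.
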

\begin{proof}
	Let $ (t_i)_{i\ge 1} \in T_a $. Then, the convexity of $ \psi $ and \eqref{grv}  yield, for all $i=\{2,\ldots,n-1\}$,
	\begin{eqnarray}\label{inq7}
		\psi(a_i) &\leq& \psi \left( \frac{\Delta t_{i}  }{\Delta t_{i} +\Delta t_{i-1}} a_{i-1}+\frac{ \Delta t_{i-1}  }{\Delta t_{i} +\Delta t_{i-1}}a_{i+1}\right) \nonumber\\
		& \leq& \frac{\Delta t_{i}  }{\Delta t_{i} +\Delta t_{i-1}} \psi (a_{i-1})+\frac{ \Delta t_{i-1}  }{\Delta t_{i} +\Delta t_{i-1}} \psi (a_{i+1}).
	\end{eqnarray}
	Thus $ (t_i)_{i\ge 1} \in T_{\psi(a)} $.
\end{proof}

The following result is a direct consequence of Theorem~\ref{rc-cf} and \cite[Theorem 1]{MV}.
\begin{proposition}\label{Prop2}
	Let $(a_i)_{i\geq 1}$ be a real sequence. Then, the following statements are equivalent
	\begin{itemize}
		\item [(1)] The sequence $(t_i)_{i\geq 1}\in T_a$;
		\item [(2)] The sequence $((a_i-a_s)/(t_i-t_s))_{i>s}$ is non-decreasing;
		\item [(3)] For any integers $l,m,n$ $(l<m<n)$, the following inequality holds
		\begin{eqnarray}\label{inq3}
		\left| \begin{array}{ccc}
		t_l & a_l &1 \\ t_m & a_m & 1\\ t_n&a_n&1
		\end{array}\right| =(t_n-t_m)a_l-(t_n-t_l)a_m+(t_m-t_l)a_n\geq 0.
		\end{eqnarray}
	\end{itemize}  
\end{proposition}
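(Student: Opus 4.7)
The plan is to recognize the proposition as the sequence analogue of the classical criteria for convexity of a real function, and to reduce to them via Theorem~\ref{rc-cf}. That theorem lets us translate between $(t_i)_{i\geq 1}\in T_a$ and the convexity of a function $\varphi$ (namely $\varphi=a_t^*$, the polygonal function of \eqref{defp}) satisfying $\varphi(t_i)=a_i$ for every $i$. Once this translation is in place, the equivalences $(1)\Leftrightarrow(2)\Leftrightarrow(3)$ become precisely the slope-monotonicity and determinantal characterizations of the convexity of $\varphi$, which are Theorem~1 of \cite{MV}.

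For $(1)\Rightarrow(2)$ and $(1)\Rightarrow(3)$ I would therefore invoke those classical characterizations applied to $\varphi=a_t^*$: for any triple $l<m<n$ the corner points $(t_l,a_l),\,(t_m,a_m),\,(t_n,a_n)$ lie on the convex graph of $\varphi$, so the chord-slope lemma gives the monotonicity of the divided-difference family in $(2)$, and the standard fact that a chord of a convex graph lies above the intermediate point produces the sign condition on the $3\times 3$ determinant in $(3)$. The converses $(2)\Rightarrow(1)$ and $(3)\Rightarrow(1)$ need only a specialization to consecutive indices. In $(2)$, comparing $(a_{i+1}-a_i)/(t_{i+1}-t_i)$ with $(a_{i+2}-a_{i+1})/(t_{i+2}-t_{i+1})$ yields the defining inequality $\Delta a_i/\Delta t_i\leq \Delta a_{i+1}/\Delta t_{i+1}$ of $(1)$. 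In $(3)$, the choice $l=i-1,\,m=i,\,n=i+1$ expands to
\begin{equation*}
(t_{i+1}-t_i)a_{i-1}-(t_{i+1}-t_{i-1})a_i+(t_i-t_{i-1})a_{i+1}\geq 0,
\end{equation*}
which, after dividing by $t_{i+1}-t_{i-1}>0$, is exactly inequality \eqref{grv}, and Proposition~\ref{P1} then delivers $(1)$.

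I do not foresee a genuine obstacle: the argument is essentially a bookkeeping exercise once Theorem~\ref{rc-cf} is available. The one point calling for care is the precise interpretation of ``non-decreasing'' in $(2)$ — the classical statement for functions is phrased as monotonicity of $(\varphi(x)-\varphi(y))/(x-y)$ in each variable separately with the other fixed — and the wording should be matched to that so that the citation of \cite[Theorem~1]{MV} applies verbatim.
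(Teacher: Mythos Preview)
Your proposal is correct and follows essentially the same route as the paper: reduce to the classical characterizations of convexity via Theorem~\ref{rc-cf}, then cite \cite{MV}. The only cosmetic differences are that the paper invokes \cite[Theorem~1]{MV} for $(1)\Leftrightarrow(3)$ and \cite[Theorem~3]{MV} for $(1)\Leftrightarrow(2)$ rather than bundling both under Theorem~1, and it omits the explicit specialization arguments you give for the converses.
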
 
\begin{proof} 
	By combining Theorem~\ref{rc-cf} and \cite[Theorem~1]{MV}, we obtain that (1) is equivalent to (3). In addition, by combining  Theorem~\ref{rc-cf} and \cite[Theorem~3]{MV} we obtain the equivalence between (1) and (2).
%
\end{proof}
It is known that if a sequence $(a_n)_{n\ge 1}$ is convex and bounded above, then $(a_n)_{n\ge 1}$ is non-increasing, see \cite{Bar,Z}. One may wonder if this property remains true for relative convex sequences.
\begin{proposition}\label{P11}
	Let $(a_n)_{n\ge 1}$ be a relative convex sequence and bounded above. Then, either $(a_n)_{n\ge 1}$ is non-increasing or every sequence $(t_n)_{n\ge 1} \in T_a$ is convergent.
\end{proposition}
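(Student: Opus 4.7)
The plan is to first invoke the structure theorem, Theorem~\ref{svs}, to translate the hypothesis that $(a_n)_{n\ge 1}$ is relative convex into the statement that $(a_n)$ is strictly V-shaped, hence fits one of the five listed cases. The assumption that $(a_n)$ is \emph{not} non-increasing rules out case~(1) strictly decreasing as well as case~(2), leaving only case~(1)-increasing, or~(3), (4), or~(5). In each of the remaining cases, the tail of $(a_n)$ is strictly increasing, so there exists $N\in\N$ with $\Delta a_n > 0$ for every $n\ge N$.

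Next, fix any $(t_n)_{n\ge 1}\in T_a$, so that the ratios $\Delta a_n/\Delta t_n$ form a non-decreasing sequence. Since $(t_n)$ is increasing we have $\Delta t_n>0$, and since $\Delta a_N>0$, the quantity $c:=\Delta a_N/\Delta t_N$ is strictly positive. The monotonicity of the ratio then gives $\Delta t_n\le c^{-1}\,\Delta a_n$ for every $n\ge N$, and telescoping from $N$ to $n$ yields
$$t_{n+1}-t_N \;\le\; \frac{a_{n+1}-a_N}{c}.$$

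Because $(a_n)$ is bounded above by some $M$, the right-hand side is bounded by $(M-a_N)/c$, so $(t_n)$ is bounded above; being increasing and bounded above, it therefore converges. The only slightly delicate point is the opening case analysis: one must verify that a strictly V-shaped sequence which fails to be non-increasing must have a strictly increasing tail (equivalently, satisfy $\Delta a_n > 0$ eventually). This is immediate from inspecting the five cases listed before Theorem~\ref{svs}, since in cases (3), (4), (5) and strictly increasing version of (1), the ascending portion is by definition infinite. Everything else is then a routine telescoping argument and the monotone convergence theorem, so no substantial obstacle is expected.
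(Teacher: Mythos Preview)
Your argument is correct and at its heart identical to the paper's: from $\Delta a_n/\Delta t_n \ge c := \Delta a_N/\Delta t_N > 0$ for $n\ge N$ you telescope to $t_{n+1}-t_N \le c^{-1}(a_{n+1}-a_N)$, so boundedness of $(a_n)$ forces boundedness (hence convergence) of $(t_n)$; the paper simply runs the same inequality contrapositively, assuming $t_n\to\infty$ and deducing $a_n\to\infty$.

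The one difference worth noting is that your detour through Theorem~\ref{svs} and the five-case analysis is unnecessary. You only need a \emph{single} index $N$ with $\Delta a_N>0$, and that is precisely what ``$(a_n)$ is not non-increasing'' means; the monotonicity of $\Delta a_n/\Delta t_n$ then automatically yields $\Delta a_n \ge c\,\Delta t_n>0$ for all $n\ge N$, so the strictly increasing tail you work to establish via the structure theorem comes for free. The paper's proof takes this shorter route and avoids invoking Theorem~\ref{svs} altogether.
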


\begin{proof}
	Let's assume the existence of $(t_n)_{n\ge 1} \in T_a$ that satisfies $t_n\to \infty$.
	We should prove that $\Delta a_n \leq 0$ for all integers $n\ge 1$. If this is not the case, there must exists an integer $m$ such that $\Delta a_m > 0$. By the definition of relative convexity, for any $k>m$, we have
	$$
	\frac{\Delta a_k}{\Delta t_k} \geq \frac{\Delta a_m}{\Delta t_m}>0.
	$$
	From this, one has
	$$
	a_n-a_m= \sum_{k=m}^{n-1}\Delta t_k \frac{\Delta a_k}{\Delta t_k}\geq \frac{\Delta a_m}{\Delta t_m} \sum_{k=m}^{n-1}\Delta t_k .
	$$
	Hence,
	$$
	a_n-a_m \geq (t_n-t_m)\frac{\Delta a_m}{\Delta t_m} \to \infty
	$$
	as $n\to \infty$, which contradicts the fact that $(a_n)_{n\ge 1}$ is bounded.
\end{proof}

Different from the convex sequences, the second case in Proposition~\ref{P11} may exist. For example, the sequence $ (\arctan n)_{n\ge 1} $ is relative convex since it's monotonic, and it is bounded above. As a consequence, every $(t_n)_{n\ge 1} \in T_a$ is convergent.

It is also known \cite{Bar,Z} that a bounded convex sequence $(a_n)_{n\ge 1}$ satisfies
$$
n\Delta a_n \to 0, \quad n\to \infty,
$$
and 
$$
\sum_{n=0}^{\infty}(n+1)\Delta^2 a_n <\infty.
$$

In the following result, similar properties are established for relative convex sequences.
\begin{proposition}\label{P2}
	Let $(a_n)_{n\ge 1}$ be a bounded relative convex sequence. If there exists a sequence $(t_n)_{n\ge 1} \in T_a$ satisfying 
	\begin{equation}\label{con}
		\liminf_{n\to+\infty} \Delta t_n >0.
	\end{equation}
	Then, 
	\begin{eqnarray}\label{note1}
		n\frac{\Delta a_n}{\Delta t_n}\to 0,\quad n\to \infty
	\end{eqnarray}
	and the series 
	$$
	\sum_{n\geq 1}n\Delta \left( \frac{\Delta a_n}{\Delta t_n}\right) 
	$$
	converges.
\end{proposition}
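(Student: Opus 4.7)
The plan is to set $b_n := \Delta a_n / \Delta t_n$ and exploit the fact that, by the hypothesis $(t_n)\in T_a$, the sequence $(b_n)$ is non-decreasing. My first step is to determine the sign of $b_n$: since $\liminf \Delta t_n > 0$ forces $t_n \to +\infty$, the sequence $(t_n)$ cannot be convergent, so Proposition~\ref{P11} tells me that $(a_n)$ must be non-increasing. Consequently $\Delta a_n \le 0$, and with $\Delta t_n > 0$ I get $b_n \le 0$, non-decreasing, so in particular $(b_n)$ converges to some $\ell \le 0$.

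Next I transfer the boundedness of $(a_n)$ into summability of $b_n$. Telescoping gives
\begin{equation*}
a_{n+1} - a_1 \;=\; \sum_{k=1}^{n} \Delta a_k \;=\; \sum_{k=1}^{n} \Delta t_k \cdot b_k,
\end{equation*}
and the left-hand side is bounded. Since the summands are non-positive, the partial sums are monotone, so the series $\sum_k \Delta t_k\, b_k$ converges. Using $\liminf \Delta t_n > 0$, I fix $c > 0$ and $N_0$ with $\Delta t_k \ge c$ for $k \ge N_0$, so that $|b_k| \le c^{-1} \Delta t_k\, |b_k|$ for $k \ge N_0$, forcing $\sum_k |b_k| < \infty$. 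Because $(b_n)$ is monotone, this immediately forces $\ell = 0$, so $u_n := -b_n$ is a non-negative, non-increasing, summable sequence.

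Now I apply the standard fact that a non-increasing non-negative summable sequence $(u_n)$ satisfies $n u_n \to 0$ (using, e.g., $\sum_{k=\lfloor n/2\rfloor+1}^{n} u_k \ge \lfloor n/2 \rfloor u_n$ and the fact that this tail vanishes). This gives $n b_n \to 0$, which is exactly \eqref{note1}.

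For the second assertion I would use Abel summation. A direct index-shift computation yields
\begin{equation*}
\sum_{n=1}^{N} n\, \Delta b_n \;=\; N b_{N+1} \;-\; \sum_{n=1}^{N} b_n.
\end{equation*}
The first term on the right tends to $0$ by the previous step, and the second term converges by the absolute summability of $(b_n)$ established above. Since $\Delta b_n \ge 0$, the partial sums on the left are monotone, so their convergence yields convergence of the series. The only delicate point is the passage from convergence of $\sum \Delta t_k b_k$ to convergence of $\sum b_k$, which is exactly where the hypothesis $\liminf \Delta t_n > 0$ is essential; everything else is standard manipulation of monotone sequences.
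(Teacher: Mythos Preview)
Your proof is correct and follows essentially the same strategy as the paper's: invoke Proposition~\ref{P11} to get $(a_n)$ non-increasing, telescope to see that $\sum \Delta t_k\, b_k$ converges, use $\liminf \Delta t_n>0$ to control the unweighted sum, and finish with Abel summation. The only cosmetic difference is that you first deduce $\sum |b_k|<\infty$ and then cite the standard Pringsheim-type fact $n u_n\to 0$, whereas the paper obtains $n b_n\to 0$ directly from the Cauchy criterion on $\sum b_i\,\Delta t_i$ (via $(m-n)\alpha\, b_m \le S_m-S_n$) and only afterwards remarks that $\sum b_i<\infty$; the content is the same.
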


\begin{proof}
From the condition \eqref{con}, we deduce that $\lim\limits_{n\to\infty}t_n= \infty$. In addition, there exists $ \alpha>0 $ and $ N_1\in\N $ for which $ \Delta t_i \ge \alpha $ for every $ i\ge N_1 $.
From Proposition~\ref{P11}, we have $(a_n)_{n\ge 1}$ is non-increasing, and hence it is convergent, say that $a_n\to a$, $n\to \infty$. Therefore,
	$$
	a_1-a=\sum_{n=1}^{+\infty}-\frac{\Delta a_n}{\Delta t_n}\Delta t_n.
	$$
	Clearly, the sequence $(b_n)_{n\ge 1}$ defined by $b_n:=-\Delta a_n/\Delta t_n$ is non-negative non-increasing sequence. From this and the fact $\sum_{n=1}^{\infty} b_n \Delta t_n <\infty$, the sequence defined by 
	$$
	S_n=\sum_{i=1}^{n}b_i\Delta t_i
	$$
	is convergent. By the Cauchy Criterion, for all $\varepsilon>0$, there exists $N_2\ge N_1$ such that for all $m>n>N_2$, we have
	$$
	S_{m}-S_{n}=\left| \sum_{i=n+1}^{m}b_i\Delta t_i\right| < \varepsilon,
	$$	
	Now, by making use the fact that $(b_n)_{n\ge 1}$ is non-increasing we get
	$$
	(m-n)\alpha b_m \leq \left| \sum_{i=n+1}^{m}b_i\Delta t_i\right| < \varepsilon.
	$$
	Since $\alpha>0$  and by taking $m=2n$ one has
	$
	\lim\limits_{n\to \infty}2nb_{2n}= 0.
	$
	On the other hand
	$$
	(2n+1)b_{2n+1}=\frac{2n+1}{2n} 2nb_{2n+1}\leq 4 nb_{2n}\to 0, \quad n\to \infty.
	$$
	Therefore, $nb_n\to 0$ and \eqref{note1} follows immediately. 
	
	Observe that the series $\sum_{i=1}^{\infty}b_i<\infty$. Hence, by making use Abel's transformation, we find
	$$
	\sum_{i=1}^nb_i=\sum_{i=1}^{n}1 \cdot b_i=nb_n-\sum_{i=1}^{n-1}i\Delta b_i,
	$$
	and since $nb_n\to 0$, it follows that the series 
	$$
	\sum_{i=1}^{\infty}i\Delta\left( \frac{\Delta a_i}{\Delta t_i}\right) 
	$$
	converges.
\end{proof}

\section{Relative Convexity and Inequalities}
In this section, we focus on the use of relative convex sequences in the field of inequalities. Building upon some classical results for convex sequences such as Lupas and Hermite-Hadamard-Fej\'er inequalities, we prove analogous ones in the context of the relative convexity. Based on these new established results, one may drive even new versions of some inequalities for convex sequences as in Corollary~\ref{Cor2}  or Theorem~\ref{Cor3} below. 

\medskip

\subsection{Notation}\label{notation}
Before we state our results for this section, we need to define some notations.  For an integer $ n\ge 1 $, the vector $p=(p_1, \ldots, p_n) \in \R_+^n$ will always satisfy 
$$
P_n=\sum\limits_{i=1}^np_i>0.
$$
In addition, for $ x= (x_1, \ldots, x_n) \in \R^n$ and $ y=(y_1,\ldots,y_n) \in\R^n $ we use the notation
$$
M_{n,p}(x)=\frac{1}{P_n}\sum_{i=1}^np_ix_i \quad \text{and}\quad S_{n,p}(x,y)= M_{n,p}(xy) - M_{n,p}(x)M_{n,p}(y),
$$
where $ xy = (x_1y_1, \ldots, x_ny_n) $.

For any increasing sequence $t= (t_i)_{i\geq 1}$,  define the interval $ I_t $ by
\[
I_t =
\begin{cases}
	\left[t_1, t_n\right], & \text{if } t = \left(t_1, \ldots, t_n\right), \\
	\left[t_1, \lim\limits_{i\to \infty} t_i \right), & \text{if } t = \left(t_1, t_2, \ldots\right).
\end{cases}
\]
For any $q\in I_t$, we denote by $\lfloor q\rfloor_{t}$ the integer $i$ corresponding to the largest $t_i$ that is not grater than $q$.  In addition, the notation $\{q\}_t$ will stand for the quantity $q-t_{\lfloor q \rfloor_{t}}$. One can easily see that if $t_i=i$, then $\lfloor q \rfloor_{t}$ coincides with the ordinary floor function $\lfloor q \rfloor$, and  $\{q\}_t$ reduces to the fractional part function $\{q\}$.

The following simple example illustrates the new notation $ \lfloor\cdot\rfloor_{t} $.
\begin{example}
	Let $ t=(t_i)_{i\ge1} $ be an increasing sequence. 
	\begin{enumerate}
		\item If $ t_i = i-1 $, then $ \lfloor \pi \rfloor_t =4$ since $ t_4= 3 < \pi < 4=t_5 $. Meanwhile, we have $ \lfloor \pi \rfloor =3$. 
	
\item 	If $ t_i = \ln i $, then $ \lfloor 1/4\rfloor_t = 1 $ since $ t_1 =0< 1/4 < \ln 2 = t_2 $. Meanwhile, $ \lfloor 1/4\rfloor =0 $.
	\end{enumerate}
\end{example}


\subsection{Lupas inequality}
For two integrable functions $f, g:[a, b] \rightarrow \mathbb{R}$, consider the Chebyshev functional
$$
C(f, g):=\frac{1}{b-a} \int_a^b f(x) g(x) d x-\frac{1}{(b-a)^2} \int_a^b f(x) d x \cdot \int_a^b g(x) d x .
$$
In 1972, Lupas \cite{Lu} showed that if $f, g$ are convex functions on the interval $[a, b]$, then
\begin{eqnarray}\label{Lup}
	C(f, g) \geq \frac{12}{(b-a)^4} \int_a^b\left(x-\frac{a+b}{2}\right) f(x) d x  \int_a^b\left(x-\frac{a+b}{2}\right) g(x) dx
\end{eqnarray} 
with equality when at least one of the functions $f, g$ is a linear function on $[a, b]$. A discrete version of \eqref{Lup} was given later by Pe\u{c}ari\'{c} \cite{P}. Indeed, he showed that the inequality
\begin{equation}\label{peca}
\sum_{i=1}^n a_i b_i-\frac{1}{n} \sum_{i=1}^n a_i \sum_{i=1}^n b_i \geq \frac{12}{n\left(n^2-1\right)} \sum_{i=1}^n\left( i-\frac{n+1 }{2}\right) a_i \cdot \sum_{i=1}^n\left( i-\frac{n+1 }{2}\right) b_i
\end{equation}
holds for convex sequences $(a_i)_{i=1}^n$ and $(b_i)_{i=1}^n$, with equality when at least one of $(a_i)_{i=1}^n$ and $(b_i)_{i=1}^n$ is an arithmetic sequence.

\medskip
Motivated by the above inequalities, we provide the following version of Lupas inequality for relative convex sequences.
\begin{theorem}\label{Th5}
	Let $a=(a_1,\ldots,a_n)$ and $b=(b_1,\ldots,b_n)$ be relative convex sequences. If $t=(t_1,\ldots,t_n) \in T_a \cap T_b$, then
\begin{eqnarray}\label{Lupg}
	S_{n,p}(a,b)\geq \frac{S_{n,p}(a,t)\ S_{n,p}(b,t)}{S_{n,p}(t,t)}.
\end{eqnarray}
\end{theorem}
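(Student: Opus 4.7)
The plan is to transform \eqref{Lupg} into a Chebyshev-type inequality for a bilinear form indexed by pairs of indices, and then settle the transformed inequality via antisymmetrization combined with a total-positivity property of its kernel. The key reduction rests on the double-summation identity
\[
P_n^{2}\,S_{n,p}(u,v)\;=\;\sum_{k,l=1}^{n-1}w_{k,l}\,\Delta u_k\,\Delta v_l,\qquad w_{k,l}:=P_{\min(k,l)}\bigl(P_n-P_{\max(k,l)}\bigr),
\]
where $P_m:=\sum_{i=1}^{m}p_i$. I would derive this identity from the symmetric representation $P_n^{2}\,S_{n,p}(u,v)=\tfrac12\sum_{i,j}p_ip_j(u_i-u_j)(v_i-v_j)$ by writing, for $i>j$, $u_i-u_j=\sum_{k=j}^{i-1}\Delta u_k$ and interchanging the order of summation.

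Since $t$ is strictly increasing, I can set $A_k:=\Delta a_k/\Delta t_k$ and $B_k:=\Delta b_k/\Delta t_k$, both of which are non-decreasing by the hypothesis $t\in T_a\cap T_b$. With $W_{k,l}:=w_{k,l}\,\Delta t_k\,\Delta t_l\ge 0$, the identity above produces $P_n^{2}\,S_{n,p}(a,b)=\sum W_{k,l}A_kB_l$, $P_n^{2}\,S_{n,p}(a,t)=\sum W_{k,l}A_k$, $P_n^{2}\,S_{n,p}(b,t)=\sum W_{k,l}B_l$ and $P_n^{2}\,S_{n,p}(t,t)=\sum W_{k,l}$. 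Since $S_{n,p}(t,t)>0$, the inequality \eqref{Lupg} is therefore equivalent to the Chebyshev-type statement
\[
\Bigl(\sum W_{k,l}A_kB_l\Bigr)\Bigl(\sum W_{k,l}\Bigr)\;\ge\;\Bigl(\sum W_{k,l}A_k\Bigr)\Bigl(\sum W_{k,l}B_l\Bigr).
\]

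To finish, denote the difference of the two sides by $D$ and apply a standard antisymmetrization: a swap $(k,l)\leftrightarrow(k',l')$ followed by a partial swap $l\leftrightarrow l'$ gives
\[
4D=\sum_{k,l,k',l'=1}^{n-1}\bigl[W_{k,l}W_{k',l'}-W_{k,l'}W_{k',l}\bigr]\,(A_k-A_{k'})(B_l-B_{l'}).
\]
Because $A$ and $B$ are non-decreasing, the factor $(A_k-A_{k'})(B_l-B_{l'})$ has the sign of $(k-k')(l-l')$, so every summand is non-negative provided the bracket has the same sign as $(k-k')(l-l')$. This is precisely the total positivity of order two, namely $w_{k,l}w_{k',l'}\ge w_{k,l'}w_{k',l}$ whenever $k\le k'$ and $l\le l'$. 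Verifying this TP2 inequality is the only technical hurdle of the proof; however, the explicit product form $w_{k,l}=P_{\min(k,l)}(P_n-P_{\max(k,l)})$ reduces it to a short case analysis on the six possible interleavings of $\{k,k'\}$ with $\{l,l'\}$, each of which collapses to $(P_\alpha-P_\beta)P_n\ge 0$ with $\alpha\ge\beta$, immediate from $p_i\ge 0$.
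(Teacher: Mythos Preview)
Your argument is correct, but it is genuinely different from the route taken in the paper. The paper invokes Proposition~\ref{Prop2}: since $t\in T_a\cap T_b$, the $3\times3$ determinants
\[
\begin{vmatrix} t_l & a_l & 1\\ t_m & a_m & 1\\ t_k & a_k & 1\end{vmatrix}
\quad\text{and}\quad
\begin{vmatrix} t_l & b_l & 1\\ t_m & b_m & 1\\ t_k & b_k & 1\end{vmatrix}
\]
always have the same sign, so their product is non-negative; multiplying by $p_lp_mp_k$ and summing over $l,m,k$ reproduces Lupas's original calculation and yields \eqref{Lupg} directly. That proof is very short because it piggy-backs on Proposition~\ref{Prop2} and on Lupas's expansion from \cite{Lu}.

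Your approach instead rewrites every $S_{n,p}(\cdot,\cdot)$ via the Abel-type identity $P_n^{2}S_{n,p}(u,v)=\sum_{k,l}w_{k,l}\Delta u_k\Delta v_l$, reduces \eqref{Lupg} to a weighted Chebyshev inequality in the non-decreasing variables $A_k=\Delta a_k/\Delta t_k$ and $B_l=\Delta b_l/\Delta t_l$, and closes by checking that the Green-type kernel $w_{k,l}=P_{\min(k,l)}(P_n-P_{\max(k,l)})$ is $\mathrm{TP}_2$. This is longer but entirely self-contained: it never appeals to \cite{Lu}, and it isolates the structural reason for the inequality (total positivity of $w$), which would let the same template handle any other $\mathrm{TP}_2$ kernel in place of $w$. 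The paper's proof, by contrast, is the most economical option given the determinant characterization already established.
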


\begin{proof}
	
	From Proposition~\ref{Prop2}, and for arbitrary indices $l,m,k$, we have
	\begin{equation*}
		\left| \begin{array}{ccc}
			t_l & a_l &1 \\ t_m & a_m & 1\\ t_k&a_k&1
		\end{array}\right| \cdot \left| \begin{array}{ccc}
		t_l & b_l &1 \\ t_m & b_m & 1\\ t_k&b_k&1
	\end{array}\right|  \ge 0,
	\end{equation*}
	which is equivalent to
	$$
	\Bigg( (t_k-t_m)a_l-(t_k-t_l)a_m+(t_m-t_l)a_k \Bigg)       \Bigg( (t_k-t_m)b_l-(t_k-t_l)b_m+(t_m-t_l)b_k \Bigg)     \geq 0.
	$$
	Now, following the proof provided in \cite{Lu}, that is, multiplying the above inequality by $ p_l p_m p_k $ and then summing with respect to each index $ l,m,k $ from $ 1 $ to $ n $, we get \eqref{Lupg}.
\end{proof}

The following corollary is a direct consequence of Theorem~\ref{Th5}.
\begin{corollary}
	Under the hypotheses of Theorem~\ref{Th5}, and for $ p = (1, \ldots,1) $, we obtain 
	\begin{eqnarray}\label{Lup2}
		\sum_{i=1}^n a_i b_i-\frac{1}{n} \sum_{i=1}^n a_i \sum_{i=1}^n b_i \geq K_n(t)\sum_{i=1}^n\left(t_i-\frac{\sum_{i=1}^n t_i}{n} \right) a_i \cdot \sum_{i=1}^n\left(t_i-\frac{\sum_{i=1}^n t_i}{n} \right) b_i,
	\end{eqnarray}
	
	where 
	$$
	K_n(t)=\frac{1}{\sum_{i=1}^nt_i^2-\frac{1}{n}(\sum_{i=1}^nt_i)^2}.
	$$
	Moreover, if $(t_i)_{i=1}^n$ is an arithmetic, then the inequality \eqref{Lup2} reduces to \eqref{peca}.
\end{corollary}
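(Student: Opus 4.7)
The plan is simply to specialize Theorem~\ref{Th5} to the case $p=(1,\dots,1)$ and unwind the notation, then evaluate in the arithmetic subcase.

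First, I would note that with $p=(1,\dots,1)$ we have $P_n=n$, so $M_{n,p}(x)=\frac{1}{n}\sum_{i=1}^n x_i$ for every $x\in\R^n$. Setting $\bar t := \frac{1}{n}\sum_{i=1}^n t_i$, a short computation gives
\begin{equation*}
S_{n,p}(a,b)=\frac{1}{n}\sum_{i=1}^n a_ib_i-\frac{1}{n^2}\sum_{i=1}^n a_i\sum_{i=1}^n b_i,\qquad S_{n,p}(a,t)=\frac{1}{n}\sum_{i=1}^n a_i(t_i-\bar t),
\end{equation*}
and analogously $S_{n,p}(b,t)=\frac{1}{n}\sum_{i=1}^n b_i(t_i-\bar t)$, together with
\begin{equation*}
S_{n,p}(t,t)=\frac{1}{n}\sum_{i=1}^n t_i^2-\bar t^{\,2}=\frac{1}{n}\Bigl(\sum_{i=1}^n t_i^2-\tfrac{1}{n}(\sum_{i=1}^n t_i)^2\Bigr)=\frac{1}{nK_n(t)}.
\end{equation*}

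Next I would insert these four expressions into the inequality \eqref{Lupg} of Theorem~\ref{Th5}. The two factors $1/n$ appearing in $S_{n,p}(a,t)$ and $S_{n,p}(b,t)$ combine with $nK_n(t)$ coming from $S_{n,p}(t,t)^{-1}$ to leave a single factor $\frac{1}{n}K_n(t)$ on the right. Multiplying both sides by $n$ clears the remaining $1/n$ on the left and yields exactly
\begin{equation*}
\sum_{i=1}^n a_ib_i-\frac{1}{n}\sum_{i=1}^n a_i\sum_{i=1}^n b_i \;\geq\; K_n(t)\sum_{i=1}^n(t_i-\bar t)\,a_i\cdot\sum_{i=1}^n(t_i-\bar t)\,b_i,
\end{equation*}
which is the claimed \eqref{Lup2}.

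For the final assertion I would take $t_i=\alpha+(i-1)d$ with $d>0$, so that $\bar t=\alpha+\frac{(n-1)d}{2}$ and therefore $t_i-\bar t=d\bigl(i-\frac{n+1}{2}\bigr)$. Using the standard identity $\sum_{i=1}^n\bigl(i-\frac{n+1}{2}\bigr)^2=\frac{n(n^2-1)}{12}$, one obtains $K_n(t)=\frac{12}{d^2 n(n^2-1)}$, and the factors $d$ cancel between the numerator $\sum (t_i-\bar t)a_i\cdot\sum(t_i-\bar t)b_i$ and $K_n(t)$, reducing \eqref{Lup2} precisely to \eqref{peca}. There is no genuine obstacle: the argument is entirely a bookkeeping exercise, with the only mild point being the cancellation of the various factors of $n$ when passing from the $M_{n,p}$-normalized form of Theorem~\ref{Th5} to the unnormalized form of \eqref{Lup2}.
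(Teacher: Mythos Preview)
Your proposal is correct and is exactly the approach the paper intends: the corollary is stated there as ``a direct consequence of Theorem~\ref{Th5}'' with no further proof, and your computation carries out precisely that specialization and bookkeeping. The cancellation of the $1/n$ factors and the arithmetic reduction to \eqref{peca} are both handled correctly.
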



\subsection{Hermite-Hadamard-Fej\'{e}r type inequalities }
The classical Hermite-Hadamard-Fej\'er inequality gives an estimate of the weighted mean value of a convex function  $f:[a, b] \rightarrow \mathbb{R}$ with respect to the weight function $p:[a, b] \rightarrow [0, \infty)$. In fact, if $ p(x) $ is an integrable function and symmetric about $({a+b})/{2}$, then
\begin{equation}\label{HH}
	f\left(\frac{a+b}{2}\right) \int_a^b p(x) d x \leq \int_a^b p(x) f(x) d x \leq \frac{f(a)+f(b)}{2} \int_a^b p(x) d x.
\end{equation}
Fore more details, see \cite[Chapter 5]{PPT}. 

\medskip 
The discrete counterpart of \eqref{HH} for convex sequences is given in \cite[Theorem 1.4]{LB}. Precisely, the following inequality is established
\begin{equation}\label{d}
	\frac{a_{m+1}+a_{m}}{2} \leq \frac{1}{P_n}\sum_{i=1}^n p_ia_i\leq \frac{a_1+a_n}{2},
\end{equation}
where $ m=\left\lfloor ({n+1})/{2} \right\rfloor $, $(a_i)_{i=1}^n$ is a convex sequence and $(p_i)_{i=1}^n$  is symmetric about $(n+1)/2$, that is, $p_i=p_{n+1-i}$ , for all $i=1,\ldots,n$.
Notice that in \cite{LB} the lower bound in \eqref{d} is given as $({a_m+a_{n+1-m}})/{2}$. However, one can easily check that these quantities are equal.

\medskip

Later on, Niezgoda \cite[Theorem 3.1]{N} employed some matrix methods based on column stochastic and doubly stochastic matrices to obtain the following general version to the right hand side of \eqref{d}, without the symmetry condition on $(p_i)_{i=1}^n$. In fact, for a non-decreasing convex function $\psi: I \rightarrow \mathbb{R}$ defined on an interval $I\subset \mathbb{R}$, and for a convex sequence $(a_i)_{i=1}^n\subset I$,  the following inequality holds 
\begin{eqnarray}\label{c5}
	\sum_{i=1}^n p_i\psi( a_i)\leq\left( \sum_{i=1}^n\frac{n-i}{n-1}p_i\right) \psi(a_1)+\left( \sum_{i=1}^n\frac{i-1}{n-1}p_i\right) \psi(a_n).
\end{eqnarray} 

%


\medskip
Motivated by the above results, we give now an analogue of Hermite-Hadamard-Fej\'{e}r inequality for relative convex sequences.

\begin{theorem}\label{Th1}
	Suppose that $\psi: I \rightarrow \mathbb{R}$ be a non-decreasing convex function defined on an interval $I\subset \mathbb{R}$. Let $a=(a_1,\ldots,a_n)\subset I^n$ be a relative convex sequence.
	Then, for every $t=(t_1, \ldots,t_n) \in T_a$, we have
\begin{eqnarray}\label{e6}
\gamma_t \psi (a_{m+1})+(1-\gamma_t)\psi (a_{m})	  \leq \frac{1}{P_n}	\sum_{i=1}^n p_i \psi (a_i)
\leq \lambda _t \psi (a_1)+(1-\lambda_t)  \psi(a_n),
\end{eqnarray} 
where
$$
m=\lfloor M_{n,p}(t)\rfloor_{t}, \quad\gamma_t=\frac{M_{n,p}(t)-t_m}{t_{m+1}-t_m}\quad \text{and}\quad \lambda_t=\frac{t_n-M_{n,p}(t)}{t_n-t_1}.
$$
\end{theorem}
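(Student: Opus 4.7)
The plan is to reduce the discrete inequality to a convex-function inequality by introducing an auxiliary polygonal function. Let $\widetilde{a}_t^*$ denote the polygonal function whose graph is the broken line through the corner points $(t_i,\psi(a_i))$, $i=1,\ldots,n$. Corollary~\ref{Pro4} gives $t\in T_{\psi(a)}$, so Theorem~\ref{rc-cf} applied to the sequence $(\psi(a_i))$ ensures that $\widetilde{a}_t^*$ is convex on $[t_1,t_n]$ and satisfies $\widetilde{a}_t^*(t_i)=\psi(a_i)$. Since $M_{n,p}(t)$ is a convex combination of $t_1,\ldots,t_n$, it belongs to $[t_1,t_n]$, and by definition of $\lfloor\cdot\rfloor_t$ it lies in $[t_m,t_{m+1})$.

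For the left-hand inequality I would apply Jensen's inequality to $\widetilde{a}_t^*$ with weights $p_i/P_n$ at the nodes $t_i$:
\begin{equation*}
\widetilde{a}_t^*\bigl(M_{n,p}(t)\bigr) \le \frac{1}{P_n}\sum_{i=1}^n p_i\, \widetilde{a}_t^*(t_i) = \frac{1}{P_n}\sum_{i=1}^n p_i\psi(a_i).
\end{equation*}
On the interval $[t_m,t_{m+1})$ the piecewise-linear formula (analogous to~\eqref{defp}) reads
\begin{equation*}
\widetilde{a}_t^*(x) = \psi(a_m) + \frac{\psi(a_{m+1})-\psi(a_m)}{t_{m+1}-t_m}(x-t_m),
\end{equation*}
so evaluating at $x=M_{n,p}(t)$ gives exactly $(1-\gamma_t)\psi(a_m)+\gamma_t\psi(a_{m+1})$, which is the left-hand side of~\eqref{e6}.

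For the right-hand inequality I would use the chord bound for the convex function $\widetilde{a}_t^*$: writing $t_i=\tfrac{t_n-t_i}{t_n-t_1}t_1+\tfrac{t_i-t_1}{t_n-t_1}t_n$ and invoking convexity at this convex combination yields
\begin{equation*}
\psi(a_i)=\widetilde{a}_t^*(t_i)\le \frac{t_n-t_i}{t_n-t_1}\psi(a_1)+\frac{t_i-t_1}{t_n-t_1}\psi(a_n),\qquad i=1,\ldots,n.
\end{equation*}
Multiplying by $p_i/P_n$ and summing turns the averaged coefficients into $\lambda_t$ and $1-\lambda_t$ and delivers the upper bound in~\eqref{e6}.

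The main obstacle I anticipate is the bookkeeping around $\lfloor M_{n,p}(t)\rfloor_t$: one must identify the correct piece of the polygonal graph in order to rewrite $\widetilde{a}_t^*(M_{n,p}(t))$ as $(1-\gamma_t)\psi(a_m)+\gamma_t\psi(a_{m+1})$. The degenerate case $M_{n,p}(t)=t_n$ (which forces the weights to concentrate at the last index) makes $m$ ill-defined and requires a brief separate check, where both sides of~\eqref{e6} collapse to $\psi(a_n)$. Beyond this, the argument rests only on Theorem~\ref{rc-cf}, Corollary~\ref{Pro4}, and the two standard convexity inequalities (Jensen and the chord bound).
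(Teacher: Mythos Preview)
Your proof is correct and follows essentially the same route as the paper: both arguments rely on the polygonal interpolant being convex (via Theorem~\ref{rc-cf} and Corollary~\ref{Pro4}), then apply Jensen for the lower bound and the chord inequality for the upper bound. The only cosmetic difference is that you work directly with the polygonal function through $(t_i,\psi(a_i))$, whereas the paper first argues with $a_t^*$ and then invokes Corollary~\ref{Pro4} to pass to $\psi(a)$; your organization is slightly cleaner, and your remark on the boundary case $M_{n,p}(t)=t_n$ is a useful addition the paper omits.
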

\begin{proof}[Proof of Theorem~\ref{Th1}]
Since $(t_i)_{i=1}^n \in T_a$ and by using the convexity of $a_t^*$, the inequality
\begin{eqnarray}
	a_i= a_t^*(t_i)&=&a_t^*\left(\frac{t_n-t_i}{t_n-t_1} t_1 +\frac{t_i-t_1}{t_n-t_1} t_n \right) \nonumber \\
	&\leq & \frac{t_n-t_i}{t_n-t_1} a_t^*(t_1)+ \frac{t_i-t_1}{t_n-t_1} a_t^*(t_n)\nonumber \\
	&=& \frac{t_n-t_i}{t_n-t_1} a_1+ \frac{t_i-t_1}{t_n-t_1} a_n\nonumber
\end{eqnarray}
holds for any $i=1,2,\ldots,n$. This together with Corollary~\ref{Pro4} imply that
\begin{eqnarray}\label{k1}
	\psi(a_i) \leq \frac{t_n-t_i}{t_n-t_1} \psi (a_1)+ \frac{t_i-t_1}{t_n-t_1} \psi (a_n)
\end{eqnarray} 
holds for any $i=1,2,\ldots,n$. Hence, multiplying both sides of \eqref{k1} by $\frac{p_i}{P_n}$ and summing with respect to $i$ from $1$ to $n$, yield the right hand side of \eqref{e6}.

\medskip
On the other hand, from the convexity of $a_t^*$  we have
\begin{eqnarray}
\frac{1}{P_n}	\sum_{i=1}^n p_i a_i= \sum_{i=1}^n \frac{p_i}{P_n} a_t^*(t_i) &\geq& a_t^*(M_{n,p}(t))) = a_{m}+\left(M_{n,p}(t)-t_m\right)\frac{\Delta a_{m}}{\Delta t_{m}},\nonumber
\end{eqnarray} 
where $ m=\lfloor M_{n,p}(t)\rfloor_{t} $, that is, $ t_m \le M_{n,p}(t)< t_{m+1} $. From this, we deduce 
$$
\frac{1}{P_n}	\sum_{i=1}^n p_i a_i \geq \gamma_ta_{m+1}+(1-\gamma_t)a_m.
$$
Consequently, by making use of Corollary~\ref{Pro4} we obtain the left hand side of \eqref{e6}. 
\end{proof}
If $t=(t_1, \ldots,t_n)$ is an arithmetic sequence, then the right inequality in \eqref{e6} simplifies to \eqref{c5}. Furthermore, we can derive the following corollary from Theorem~\ref{Th1}, which improves \cite[Theorem 1.4]{LB} and \cite[Theorem 3.4]{N}.
\begin{corollary}\label{Cor2}
	Suppose that $\psi: I \rightarrow \mathbb{R}$ be a non-decreasing convex function defined on an interval $I\subset \mathbb{R}$. Then, for every convex sequence $a=(a_1,\ldots,a_n)\subset I^n$, we have
		\begin{eqnarray}\label{NHH}
\Phi(m,m+1)\leq 	\sum_{i=1}^n p_i\psi( a_i)
	\leq \Phi(1,n),
	\end{eqnarray} 
	where 
	$$
	 m=\left\lfloor \frac{1}{P_n}\sum\limits_{i=1}^np_ii\right\rfloor \quad \text{and}\quad \Phi(u,v)=\left( \sum_{i=1}^n\frac{i-u}{v-u}p_i\right) \psi(a_v)+ \left( \sum_{i=1}^n\frac{v-i}{v-u}p_i\right) \psi(a_u).
	$$
\end{corollary}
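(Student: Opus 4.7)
The plan is to specialize Theorem~\ref{Th1} to the arithmetic sequence $t = (1, 2, \ldots, n)$. Since every convex sequence is convex w.r.t.\ the arithmetic sequence $(i)_{i\ge 1}$ (as noted just after the definition of $c_r$ in Section~1), we have $t \in T_a$, so Theorem~\ref{Th1} is applicable to this choice of $t$. Moreover, because $t_i = i$, the generalized floor $\lfloor \cdot \rfloor_t$ coincides with the ordinary floor, and the index
$$m = \lfloor M_{n,p}(t)\rfloor_t = \left\lfloor \frac{1}{P_n}\sum_{i=1}^n p_i\,i \right\rfloor$$
matches exactly the index $m$ appearing in the statement of the corollary.

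Next, I would compute the parameters $\gamma_t$ and $\lambda_t$ of Theorem~\ref{Th1} for $t_i = i$. Since $t_{m+1} - t_m = 1$ and $t_n - t_1 = n - 1$, these reduce to
$$\gamma_t = M_{n,p}(t) - m, \qquad 1 - \gamma_t = m+1 - M_{n,p}(t), \qquad \lambda_t = \frac{n - M_{n,p}(t)}{n - 1}.$$
Multiplying the two-sided inequality \eqref{e6} by $P_n$, the upper bound $P_n\left[\lambda_t \psi(a_1) + (1 - \lambda_t)\psi(a_n)\right]$ rewrites as $\Phi(1, n)$ through the elementary identities
$$P_n \lambda_t = \sum_{i=1}^n \frac{n-i}{n-1}\,p_i, \qquad P_n(1-\lambda_t) = \sum_{i=1}^n \frac{i-1}{n-1}\,p_i,$$
both of which follow from $\sum p_i\,i = P_n M_{n,p}(t)$ and $\sum p_i = P_n$.

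An analogous computation handles the lower bound: using $P_n(M_{n,p}(t) - m) = \sum_{i=1}^n (i-m)p_i$ and $P_n(m+1 - M_{n,p}(t)) = \sum_{i=1}^n (m+1-i)p_i$, the quantity $P_n[\gamma_t \psi(a_{m+1}) + (1-\gamma_t)\psi(a_m)]$ becomes $\Phi(m, m+1)$, which yields the left half of \eqref{NHH}. No step here presents a real obstacle; the whole argument is a direct specialization of Theorem~\ref{Th1}, and the only thing worth double-checking is the bookkeeping of the coefficients so that they match the definition of $\Phi(u,v)$ given in the statement.
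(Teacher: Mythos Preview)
Your proposal is correct and follows exactly the route the paper intends: the corollary is stated in the paper as a direct consequence of Theorem~\ref{Th1}, obtained by specializing to the arithmetic sequence $t_i=i$, and your coefficient bookkeeping (showing that $P_n\lambda_t,\,P_n(1-\lambda_t),\,P_n\gamma_t,\,P_n(1-\gamma_t)$ match the entries of $\Phi(1,n)$ and $\Phi(m,m+1)$) is accurate.
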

	In Corollary~\ref{Cor2}, if $p=(p_1,\ldots,p_n)$ is symmetric about $(n+1)/2$, then $m=\lfloor(n+1)/2\rfloor$ and \eqref{NHH} reduces to
$$
\frac{\psi(a_{m+1})+\psi(a_{m})}{2} \leq \frac{1}{P_n}\sum_{i=1}^n p_i\psi(a_i)\leq \frac{\psi(a_1)+\psi(a_n)}{2}.
$$

\subsection{Inequalities related to the majorization theory.}
For $ x=\left(x_1, x_2, \ldots, x_n\right) \in \mathbb{R}^n $, and  $ y=\left(y_1, y_2, \ldots, y_n\right) \in \mathbb{R}^n  $, let $ x_{[1]} \geq x_{[2]} \geq\cdots \geq x_{[n]} $ and $ y_{[1]} \geq y_{[2]} \geq \cdots \geq y_{[n]} $ denote the components of $ x $ and $ y $ in decreasing order,
respectively. We say that $ y $ majorizes $ x $ (or $ x $ is majorized by $ y $), and we write $ x \prec y $, if 
\begin{equation*}
	\sum_{i=1}^k x_{[i]} \leq \sum_{i=1}^k y_{[i]}, \,\  k=1,2, \ldots, n-1, \quad \text { and } \quad \sum_{i=1}^n x_i=\sum_{i=1}^n y_i.
\end{equation*}
Here, we are interested, in particular, in the following result due to Schur, Hardy-Littlewood-Polya.


\begin{theorem}\cite[Propositions C.1. and C.1.c.]{MOA}\label{lSch}
	Let $f$ be a real function continuous  on an interval $ I $. Then $f$ is convex on $I$ if and only if 
	$$
	\sum_{i=1}^n f(x_i) \le \sum_{i=1}^n f(y_i)
	$$
holds for all $ n\ge 2 $ and all  $ (x_1, \ldots, x_n), (y_1, \ldots, y_n) \in I^n $  satisfying $ (x_1, \ldots, x_n)\prec (y_1, \ldots, y_n)$.
\end{theorem}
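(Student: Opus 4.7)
The plan is to handle the two implications separately.

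For necessity (convexity implies the majorization inequality), the cleanest route is through the Hardy--Littlewood--Polya/Birkhoff characterization of majorization: $x\prec y$ holds if and only if $x=Py$ for some doubly stochastic matrix $P$, and every such $P$ is a convex combination of permutation matrices, $P=\sum_\sigma\mu_\sigma P_\sigma$ with $\mu_\sigma\ge 0$ and $\sum_\sigma\mu_\sigma=1$. Then each coordinate is an average $x_i=\sum_\sigma\mu_\sigma y_{\sigma(i)}$, Jensen's inequality gives $f(x_i)\le\sum_\sigma\mu_\sigma f(y_{\sigma(i)})$, and summing over $i$ and swapping the order of summation yields
$$
\sum_{i=1}^n f(x_i)\le\sum_\sigma\mu_\sigma\sum_{i=1}^n f(y_{\sigma(i)})=\sum_{i=1}^n f(y_i).
$$
A more elementary alternative is the $T$-transform reduction: $x$ is obtained from $y$ by finitely many elementary transformations of the form $(u,v)\mapsto(\alpha u+(1-\alpha)v,\,(1-\alpha)u+\alpha v)$, and each such step does not decrease $f(u)+f(v)$ when $f$ is convex, by the very definition of convexity.

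For sufficiency, I would exploit the hypothesis with a cleverly chosen pair. Fix $a,b\in I$ with $a\ge b$, a rational $\lambda=k/n\in[0,1]$, and set $c=\lambda a+(1-\lambda)b$. Consider
$$
x=(\underbrace{c,\ldots,c}_{n}),\qquad y=(\underbrace{a,\ldots,a}_{k},\underbrace{b,\ldots,b}_{n-k}).
$$
A short check of partial sums shows $x\prec y$: the inequality $\sum_{i=1}^j x_{[i]}\le\sum_{i=1}^j y_{[i]}$ reduces to $jc\le ja$ when $j\le k$ and to $j(c-b)\le k(a-b)$ (equivalently $j\lambda\le k$) when $k<j<n$, while the total sums agree by the definition of $c$. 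Applying the hypothesis gives $nf(c)\le kf(a)+(n-k)f(b)$, i.e.
$$
f(\lambda a+(1-\lambda)b)\le\lambda f(a)+(1-\lambda)f(b).
$$
Since this holds for every rational $\lambda\in[0,1]$, the assumed continuity of $f$ extends it to all $\lambda\in[0,1]$.

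The main obstacle lies in the necessity direction, since both standard approaches rest on a nontrivial structural result (Birkhoff's decomposition of doubly stochastic matrices, or the $T$-transform reduction of Hardy--Littlewood--Polya). The sufficiency direction, by contrast, requires only the judicious choice of the majorizing pair above, together with the continuity hypothesis, which is precisely what allows the passage from rational-coefficient convexity to full convexity.
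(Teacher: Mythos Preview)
The paper does not supply a proof of this theorem at all: it is stated with a citation to \cite[Propositions~C.1 and C.1.c]{MOA} and used as a black box in the proof of Theorem~\ref{Th_Maj}. So there is no ``paper's own proof'' to compare against.

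That said, your argument is the standard one and is correct. For necessity, either of your two routes (Birkhoff decomposition plus Jensen, or the Hardy--Littlewood--P\'olya $T$-transform reduction) works; both are exactly the approaches behind the cited propositions in \cite{MOA}. For sufficiency, your choice of $x=(c,\ldots,c)$ and $y=(a,\ldots,a,b,\ldots,b)$ with $c=\lambda a+(1-\lambda)b$, $\lambda=k/n$, is the natural one, the verification of $x\prec y$ is correct, and the passage from rational $\lambda$ to arbitrary $\lambda\in[0,1]$ is precisely where the continuity hypothesis is needed. One minor remark: in checking the partial sums for $k<j<n$ you should treat the degenerate case $a=b$ separately (then $c=a=b$ and the majorization is trivial), since otherwise dividing by $a-b$ is illicit; this is a cosmetic gap only.
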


\medskip
A version of Theorem~\ref{lSch} for convex sequences is given in the following result.

\begin{theorem}[{\cite[Theorem~2]{WD}, \cite[Theorem 1.1.9]{HNS}}] \label{seq-Schur}
Let $\left(a_i\right)_{i\ge1}$ be a real sequence. Then $\left(a_i\right)_{i\ge1}$ is convex if and only if
	$$
	\sum_{i=1}^n a_{p_i} \leq 	\sum_{i=1}^n a_{q_i}
	$$
	holds for all $ n\ge 2 $ and all $ (p_1, \ldots, p_n) , (q_1, \ldots, q_n) \in \N ^n$ satisfying $\left(p_1, \ldots, p_n\right) \prec\left(q_1, \ldots, q_n\right)$.
\end{theorem}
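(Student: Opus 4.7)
The plan is to reduce the statement to the classical Schur-Hardy-Littlewood-Polya theorem (Theorem~\ref{lSch}) via the polygonal convex extension discussed just after \eqref{cv}.

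For the forward direction, I would assume $(a_i)_{i\ge 1}$ is convex and let $f:[1,\infty)\to\R$ be the piecewise linear function with corner points $(i,a_i)$. As recalled in the introduction (and as a particular case of Theorem~\ref{rc-cf} with $t_i = i$), $f$ is convex on $[1,\infty)$ and satisfies $f(i) = a_i$ for every $i\in\N$. Given integer vectors with $(p_1,\ldots,p_n) \prec (q_1,\ldots,q_n)$, all coordinates lie in $\N \subset [1,\infty)$, so Theorem~\ref{lSch} applied to $f$ on any compact subinterval containing these points yields $\sum_{i=1}^n f(p_i) \le \sum_{i=1}^n f(q_i)$, which is exactly $\sum_{i=1}^n a_{p_i} \le \sum_{i=1}^n a_{q_i}$.

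For the reverse direction, I would fix an integer $k\ge 2$ and observe that the pair of integer vectors $(k,k)$ and $(k-1,k+1)$ satisfies $(k,k) \prec (k-1,k+1)$: both sides sum to $2k$, while the largest entry on the left is $k \le k+1$, the largest entry on the right. Applying the hypothesis to this pair with $n=2$ gives $2a_k \le a_{k-1} + a_{k+1}$, i.e.\ \eqref{cv}, and hence the convexity of $(a_i)_{i\ge 1}$.

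There is no substantive obstacle; the argument is essentially a one-line translation using the polygonal extension in one direction and the two-term majorization $(k,k) \prec (k-1,k+1)$ in the other. The only minor technicality is that Theorem~\ref{lSch} is stated on an interval $I$, whereas the polygonal extension lives on the unbounded set $[1,\infty)$, but restricting to any compact subinterval containing the finitely many points involved removes this concern.
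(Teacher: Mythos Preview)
Your argument is correct. Note, however, that the paper does not supply its own proof of Theorem~\ref{seq-Schur}; it is quoted from \cite{WD,HNS} as a known result. The closest the paper comes is the proof of the generalization Theorem~\ref{Th_Maj}, and there the method is exactly yours: pass to the polygonal extension $a_t^*$ (which for $t_i=i$ is your $f$) and invoke Theorem~\ref{lSch}. The one small difference is that in Theorem~\ref{Th_Maj} the vectors $p,q$ range over all of $I_t^n$, so the paper can use both directions of Theorem~\ref{lSch} at once; in Theorem~\ref{seq-Schur} the hypothesis only gives the inequality for integer vectors, so the converse direction of Theorem~\ref{lSch} is not available and your direct check via $(k,k)\prec(k-1,k+1)$ is the right move.
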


In the following result, we generalize Theorem~\ref{seq-Schur} in the sense that the components of the vectors $ (p_1, \ldots, p_n) $ and $ (q_1, \ldots, q_n) $ can take non-negative real values.

\begin{theorem}\label{Cor3}
	Let  $\left(a_i\right)_{i\ge1}$ be a real sequence.  Then $(a_i)_{i\geq 1}$ is convex if and only if
	\begin{eqnarray}\label{Maj2}
		\sum_{i=1}^n \left( a_{\lfloor p_i \rfloor }-a_{\lfloor q_i \rfloor } \right) \leq 	 \sum_{i=1}^n \Big( \{q_i\} \Delta a_{\lfloor q_i\rfloor} -\{p_i\}\Delta a_{\lfloor p_i\rfloor}\Big) 
	\end{eqnarray}
	holds for all $ n\ge 2 $ and all $ (p_1, \ldots, p_n) , (q_1, \ldots, q_n) \in [1,+\infty)^n$ satisfying  $ \left(p_1, \ldots, p_n\right) \prec\left(q_1,\ldots,q_n\right) $.
\end{theorem}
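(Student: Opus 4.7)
The plan is to reduce the statement to the classical Schur--Hardy--Littlewood--P\'olya majorization inequality (Theorem~\ref{lSch}) by reinterpreting the two sides of \eqref{Maj2} as values of the polygonal extension of the sequence $(a_i)_{i\ge 1}$. Specifically, I would introduce the piecewise linear function $a^{*}\colon[1,+\infty)\to\R$ defined by
$$
a^{*}(x) \;=\; a_{\lfloor x\rfloor} + \{x\}\,\Delta a_{\lfloor x\rfloor},
$$
which is exactly the polygonal line through the corner points $(i,a_i)$ that was used in the proof of Theorem~\ref{rc-cf} in the special case $t_i=i$. Note that $a^{*}(i)=a_i$ for every $i\in\N$, since $\lfloor i\rfloor=i$ and $\{i\}=0$.

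Rearranging \eqref{Maj2}, the inequality to be proved is equivalent to
$$
\sum_{i=1}^{n}\bigl(a_{\lfloor p_i\rfloor} + \{p_i\}\,\Delta a_{\lfloor p_i\rfloor}\bigr) \;\le\; \sum_{i=1}^{n}\bigl(a_{\lfloor q_i\rfloor} + \{q_i\}\,\Delta a_{\lfloor q_i\rfloor}\bigr),
$$
that is, $\sum_{i=1}^{n}a^{*}(p_i)\le\sum_{i=1}^{n}a^{*}(q_i)$. This algebraic step is the key reformulation and is immediate from the definition of $a^{*}$.

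With this reformulation the theorem reduces to two classical equivalences. For the \emph{direct} implication, if $(a_i)_{i\ge 1}$ is convex, then, as recalled in the Introduction (cf.\ \cite[Remark~1.12]{PPT}), $a^{*}$ is a convex function on $[1,+\infty)$; applying Theorem~\ref{lSch} to $f=a^{*}$ on $I=[1,+\infty)$ together with the hypothesis $(p_1,\ldots,p_n)\prec(q_1,\ldots,q_n)$ yields at once the desired inequality. For the \emph{converse}, one specializes $(p_1,\ldots,p_n)$ and $(q_1,\ldots,q_n)$ to integer vectors in $\N^n$ satisfying the same majorization relation; then $\{p_i\}=\{q_i\}=0$ and $\lfloor p_i\rfloor=p_i$, $\lfloor q_i\rfloor=q_i$, so \eqref{Maj2} reduces to $\sum_{i=1}^{n}a_{p_i}\le\sum_{i=1}^{n}a_{q_i}$, which by Theorem~\ref{seq-Schur} forces $(a_i)_{i\ge 1}$ to be convex.

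The only mildly delicate point I anticipate is the clean rewriting of \eqref{Maj2} as $\sum a^{*}(p_i)\le\sum a^{*}(q_i)$, but this is a routine term-by-term identification. Once it is in place, the proof is essentially a transfer between the sequence world and the (polygonal) function world, with the Schur--HLP theorem doing all the real work; no new inequality needs to be established.
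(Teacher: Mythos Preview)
Your proposal is correct and follows essentially the same route as the paper: rewrite \eqref{Maj2} as $\sum a^*(p_i)\le\sum a^*(q_i)$ for the polygonal extension $a^*$, and then transfer the question to the Schur--Hardy--Littlewood--P\'olya equivalence (Theorem~\ref{lSch}) via the standard fact that $(a_i)$ is convex iff $a^*$ is convex. The only cosmetic difference is that the paper packages both implications through Theorem~\ref{lSch} (proving the more general Theorem~\ref{Th_Maj} and specializing to $t_i=i$), whereas you handle the converse by restricting to integer vectors and invoking Theorem~\ref{seq-Schur}; either way works.
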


Instead of proving Theorem~\ref{Cor3}, we will prove the following result, which gives an equivalence for relative convex sequences by means of the majorization. In fact, Theorem~\ref{Cor3} can be derived directly from Theorem~\ref{Th_Maj} below by noticing that  $ (a_i)_{i\ge 1} $ is convex if and only if $ (i)_{i\ge 1} \in T_a $.

\begin{theorem}\label{Th_Maj}
	Let $ (a_i)_{i\ge1} $ be a real sequence. Then $ (t_i)_{i\ge 1} \in~ T_a$ if and only if the following inequality
	\begin{equation}\label{Maj}
		\sum_{i=1}^n \left( a_{\lfloor p_i \rfloor_t }-a_{\lfloor q_i \rfloor_t } \right) \leq 	 \sum_{i=1}^n \left( \{q_i\}_t\frac{\Delta a_{\lfloor q_i\rfloor_{t}}}{\Delta t_{\lfloor q_i\rfloor_{t}}}-\{p_i\}_t\frac{\Delta a_{\lfloor p_i\rfloor_{t}}}{\Delta t_{\lfloor p_i\rfloor_{t}}}\right) 
	\end{equation}
	holds for any $ (p_1, \ldots, p_n) , (q_1, \ldots, q_n) \in I_t^n$ satisfying  $ \left(p_1, \ldots, p_n\right) \prec\left(q_1,\ldots,q_n\right). $
\end{theorem}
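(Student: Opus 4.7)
The plan is to observe that, under the identification $p\mapsto a_t^*(p)$, inequality \eqref{Maj} is precisely the Hardy--Littlewood--P\'olya majorization inequality for the polygonal function $a_t^*$ defined in \eqref{defp}; the proof then reduces to a short application of Theorems~\ref{rc-cf} and~\ref{lSch}. More precisely, for any $p\in I_t$ writing $i=\lfloor p\rfloor_t$ gives $p\in[t_i,t_{i+1})$ and $\{p\}_t=p-t_i$, so \eqref{defp} reads
$$
a_t^*(p)=a_i+\frac{\Delta a_i}{\Delta t_i}(p-t_i)=a_{\lfloor p\rfloor_t}+\{p\}_t\,\frac{\Delta a_{\lfloor p\rfloor_t}}{\Delta t_{\lfloor p\rfloor_t}}.
$$
Regrouping the terms of \eqref{Maj} according to this identity, one sees at once that \eqref{Maj} is equivalent to the single inequality
$$
\sum_{i=1}^n a_t^*(p_i)\le \sum_{i=1}^n a_t^*(q_i).
$$

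Once this equivalence is established, both implications will be nearly immediate. For the forward direction, I would assume $(t_i)_{i\ge 1}\in T_a$; then Theorem~\ref{rc-cf} (and the argument in its proof) guarantees that $a_t^*$ is continuous and convex on $I_t$. For any majorized pair $(p_1,\ldots,p_n)\prec(q_1,\ldots,q_n)$ in $I_t^n$, Theorem~\ref{lSch} applied to $f=a_t^*$ on $I_t$ then yields $\sum a_t^*(p_i)\le\sum a_t^*(q_i)$, which by the equivalence is \eqref{Maj}. For the converse, I would suppose \eqref{Maj} holds for every such majorized pair. By the equivalence, $a_t^*$ satisfies the Schur--Hardy--Littlewood--P\'olya inequality on $I_t$, and since $a_t^*$ is continuous (being piecewise linear), the converse direction of Theorem~\ref{lSch} forces $a_t^*$ to be convex on $I_t$. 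Invoking Theorem~\ref{rc-cf} with $\varphi=a_t^*$ will then deliver $(t_i)_{i\ge 1}\in T_a$.

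Since the structural work has already been done in Theorems~\ref{rc-cf} and~\ref{lSch}, there is no deep obstacle; the main point to be careful about is the bookkeeping at the right endpoint of $I_t$ in the finite case. For $p=t_n$ one has $\lfloor p\rfloor_t=n$ and $\{p\}_t=0$, while $\Delta a_n/\Delta t_n$ is undefined, so I would adopt the convention $0\cdot\Delta a_n/\Delta t_n=0$, under which the identification $a_t^*(p)=a_{\lfloor p\rfloor_t}+\{p\}_t\,\Delta a_{\lfloor p\rfloor_t}/\Delta t_{\lfloor p\rfloor_t}$ continues to hold by the continuity of $a_t^*$. I would also note explicitly that Theorem~\ref{lSch} is a pointwise statement depending only on vectors lying in $I_t$, hence applies equally well regardless of whether $I_t$ is finite, half-open, or unbounded on the right.
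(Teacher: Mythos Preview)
Your proposal is correct and follows essentially the same approach as the paper: both proofs rewrite \eqref{Maj} as $\sum a_t^*(p_i)\le\sum a_t^*(q_i)$ via the identity $a_t^*(q)=a_{\lfloor q\rfloor_t}+\{q\}_t\,\Delta a_{\lfloor q\rfloor_t}/\Delta t_{\lfloor q\rfloor_t}$, and then invoke the equivalence between $(t_i)\in T_a$ and convexity of $a_t^*$ (Theorem~\ref{rc-cf}) together with the Hardy--Littlewood--P\'olya characterization (Theorem~\ref{lSch}). Your write-up is in fact more careful than the paper's, as you spell out both directions separately and address the endpoint convention at $p=t_n$, which the paper leaves implicit.
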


\begin{proof}
%
	From Theorem~\ref{rc-cf} and its proof,  we know that $ (t_i)_{i\ge 1} \in T_a$ is equivalent to say that the polygonal function $a_t^*$ defined in \eqref{defp} is convex on $I_t$.  Theorem~\ref{lSch} shows that 
	\begin{eqnarray}\label{ine}
	\sum_{i=1}^n a_t^*(p_i) \leq \sum_{i=1}^na_t^*(q_i)
\end{eqnarray}
	holds for all $ n\ge 2 $ and for all $ (p_1, \ldots, p_n) , (q_1, \ldots, q_n) \in I_t^n$  satisfying  $ \left(p_1, \ldots, p_n\right) \prec\left(q_1,\ldots,q_n\right) $. By adopting the notations from Subsection~\ref{notation} we can write
	$$
		a_t^*(q)=a_{\lfloor q\rfloor_{t}}+\{q\}_t\frac{\Delta a_{\lfloor q\rfloor_{t}}}{\Delta t_{\lfloor q\rfloor_{t}}}, \quad  q\in I_t.
	$$
Plugging this into \eqref{ine}, we obtain \eqref{Maj}.
\end{proof}





\begin{thebibliography}{99}


    \bibitem{Bar}
Bary, N. K.,
\emph{A Treatise on Trigonometric Series. Vols. I, II.}. 
Pergamon Press Book, Macmillan, New York, 1964

	\bibitem{cargo}
Cargo,~G.~T., 
\emph{Comparable means and generalized convexity}.
J. Math. Anal. App. \textbf{12} (1965), 387--392. 


	

	\bibitem{HLP}
Hardy, G. H., Littlewood, J. E. and Polya, G.,
\emph{Inequalities}, second ed.
Cambridge University Press, 1952.


	\bibitem{LB}
Latreuch, Z., Belaidi, B.,
\emph{New inequalities for convex sequences with applications}. 
Int. J.	Open Problems Comput. Math. \textbf{5} (2012), 15--27.
	
		\bibitem{Lu}
	Lupas, A.,
	\emph{An integral inequality for convex functions}. 
	Publ. Fac. Electrotech. Univ. Belgrade, Ser. Math. Phys. \textbf{17-19} (1972), 381--409.
	
	
	\bibitem{MOA}
	Marshall, A. W., Olkin I. and Arnold, B. C.,
	\emph{Inequalities: The Theory of Majorization and its Applications}, 
	second ed. 
	Springer, New York, 2011.
	


	

	

	

	
	\bibitem{MPF}
	Mitrinovi\'{c}, D. S., 	Pe\u{c}ari\'{c}, J. E. and Fink, A. M.,
	\emph{Classical and New Inequalities in Analysis}.
	Kluwer Academic Publishers,  1993.
	
	
	
		\bibitem{MV}
	Mitrinovi\'{c}, D. S. (in cooperation with Vasić., P. M.),
	\emph{Analytic Inequalities}. 
	Springer-Verlag, Berlin, 1970.
	
	\bibitem{NP}
	Niculescu, C.~P. and Persson, L.~E.,
	\emph{Convex Functions and Their Applications, A Contemporary Approach}. 
	Springer-Verlag, New York, 2006.

	\bibitem{N}
Niezgoda, M.,
\emph{Sherman, Hermite-Hadamard and Fejér like inequalities for convex sequences and non-decreasing convex functions}. 
Filomat \textbf{31} (2017), 2321--2335. 


	\bibitem{P}
Pe\u{c}ari\'{c}, J. E.,
\emph{On some inequalities for convex sequences}. 
Publ. Inst. Math., Nouv. Sér. \textbf{ 33}(47) (1983), 173--178 .

\bibitem{PPT}
Pe\u{c}ari\'{c}, J. E., Proschan, F. and Tong, Y. L.,
\emph{Convex Functions, Partial Orderings, and Statistical Applications}. 
 Academic Press, Inc., Boston, 1992.


	
		\bibitem{RS}
	Rustogi, K. and Strusevich, V. A.,
	\emph{Convex and V-shaped sequences of sums of functions that depend on ceiling functions}. 
	J. Integer Sequences \textbf{14} (2011), Article 11.1.5.


\bibitem{HNS}
Shi, H. N., 
\emph{Schur-Convex Functions and Inequalities}. \emph{Volume 2: Applications in inequalities.} 
Berlin De Gruyter, Harbin Institute of Technology Press, 2019.


	
	\bibitem{WD}
	Wu, S. and Debnath, L.,
	\emph{Inequalities for convex sequences and their applications}. 
	Comput. Math. Appl. \textbf{54} (2007), 525--534. 
	

	

	

	

	

	

	

	

	
	\bibitem{Z}
	Zygmund, A.,
	\emph{Trigonometric series. Vol. I, II}. 
3rd edn. Cambridge University Press, Cambridge (2002).
\end{thebibliography}
\end{document}